\DeclareMathOperator{\conv}{\rm conv}
\DeclareMathOperator{\fr}{\rm bd}
\DeclareMathOperator{\cl}{\rm cl}
\newcommand{\mR}{\mathbb{R}}
\newcommand{\mZ}{\mathbb{Z}}
\newtheorem{prop}{Proposition}
\newtheorem{Lem}{Lemma}
\newtheorem{Def}{Definition}
\newtheorem{ex}{Example}
\newcommand{\rien}[1]{}
\newcommand{\ext}{{{\rm ext}}}
\newcommand{\init}{{{\rm init}}}
\title{A Simple and Efficient Dichotomic Search Algorithm for Multi-Objective Mixed Integer Linear Programmes}
\author{
  Anthony Przybylski$^1$, Kathrin Klamroth$^2$, Renaud Lacour$^2$\\
  (1) Universit\'e de Nantes\\
  LS2N UMR CNRS 6004\\
  2 Rue de la Houssini\`ere BP 92208\\
  44322 Nantes Cedex 03 -- France\\
  Anthony.Przybylski@univ-nantes.fr\\
\and
  (2) University of Wuppertal\\
  Optimization and Approximation\\
  Department C - Mathematics und Natural Sciences\\
  Gau{\ss}str.~20, 
  42119 Wuppertal -- Germany\\ 
  \{klamroth,lacour\}@math.uni-wuppertal.de\\
}
\date{\today}
\begin{document}

\maketitle

%
%

\begin{abstract}
We present a simple and at the same time fficient algorithm to compute all nondominated extreme points in the outcome set of multi-objective mixed integer linear programmes in any dimension. 
The method generalizes the well-known dichotomic scheme to compute the set of nondominated extreme points in the outcome set of a bi-objective programme based on the iterative solution of weighted sum scalarizations. It uses as a main routine a convex hull algorithm. 
The algorithm is illustrated with, and numerically tested on, instances of multi-objective assignment and knapsack problems. Experimental results confirm the computational efficiency of the approach. 
Finally, an implementation in incremental polynomial time with respect to the number of computed nondominated extreme points is possible, under the assumption that the lexicographic version of the problem can be solved in polynomial time.

\vspace{1ex} \noindent
{\bf Keywords:} multi-objective mixed integer linear programming; nondominated extreme point;  dichotomic search; weight set decomposition; convex hull.

\end{abstract}

\section{Introduction}\label{intro}

Even though efficient solution methods exist for many classes of single-objective integer linear programming problems (like, for example, knapsack and assignment problems), this is in general not the case if multiple objective functions have to be considered. Multi-objective integer linear programming problems are often intractable (and hence possess a possibly huge number of efficient solutions), and the majority of efficient solutions may be unsupported, \citep[see, for example,][]{MEXG2002}. To nevertheless obtain information on the structure of a given problem and on the related trade-offs, the iterative solution of weighted sum scalarizations is an indispensible tool in many algorithms. This includes, for example, two phase methods as well as branch and bound and dynamic programming based algorithms \citep{Ehrgott2016}.
Weighted sum scalarizations principally allow the computation of all nondominated points on the boundary of the convex hull of the set of feasible outcome vectors, and in particular of its extreme points which will be referred to as \emph{nondominated extreme points} in the following. While the identification of the relevant weights is easy in bi-objective problems, this is in general not the case in higher dimensions. Existing methods are often computationally expensive and/or complicated to use as will be illustrated in the following sections.

The goal of this paper is to devise a simple and at the same time efficient algorithm to compute \emph{all} nondominated extreme points of a multi-objective mixed integer linear programming problem in any dimension.
To keep the exposition simple, we will focus on multi-objective integer linear programmes in the following.
However, most of the results immediately transfer to the case of multi-objective mixed integer linear
programmes. We will indicate this in the following whenever appropriate.

A multi-objective integer linear programme is written as

\parbox{0.85\textwidth}{
$$
 \min\left\{\left(z_1(x),\ldots,z_p(x)\right) = Cx:  x \in X\right\}
$$
}\hfill
\parbox{0.1\textwidth}{\hfill (MOIP)}
where $p \geqq 2$ and $C \in \mR^{p \times n}$. $X$ denotes the set of
feasible solutions of the problem and is defined by 
\begin{equation}\label{feasible}
X = \{x \in \mZ^n : Ax = b,\ x\geqq 0\}, 
\end{equation} 
with $A \in \mR^{m \times n}$ and $b \in \mR^m$. 
Unless stated otherwise, we will assume that all data is integer.
The {\it outcome
set} $Y$ is defined by $Y := \{Cx: x \in X\}$. 

We assume that no feasible solution minimizes all objective functions
simultaneously and that the ideal point $y^I\in\mR^p$ with components
$y_k^I:=\min\{z_k(x): x\in X\}$, $k=1,\dots,p$, exists and is strictly positive, i.e. $y^I_k >0$, $k = 1,\ldots,p$. Note that this is not a restrictive assumption. We use the following notation for componentwise orders
in $\mR^p$. Let  $y^1,y^2 \in \mR^p$. We write $y^1 \leqq y^2$
if $y^1_k \leqq y^2_k$ for $k = 1,\ldots,p$, $y^1 \leq y^2$ if $y^1
\leqq y^2$ and $y^1 \neq y^2$, and $y^1 < y^2$ if $y^1_k < y^2_k, k = 1,\ldots,p$. We define $\mR^p_\geqq := \{x \in \mR^p : x \geqq 0\}$
and analogously $\mR^p_\ge$ and $\mR^p_>$. 

\begin{Def}
A feasible solution $x^* \in X$ is  {\em efficient (weakly efficient)} if 
there does not exist any other feasible solution $x \in X$ such that 
$z(x)  \leq z(x^*)$ ($z(x) < z(x^*)$). If $x^*$ is efficient, then $z(x^*)$ is a {\em nondominated outcome vector (weakly nondominated outcome vector)} or {\em nondominated point (weakly nondominated point)} for short. If $x, x' \in X$ are such that $z(x) \le z(x')$
we say that $x$ dominates $x'$ and $z(x)$ dominates $z(x')$.  Feasible
solutions $x, x' \in X$ are {\em equivalent} if  $z(x) = z(x')$.

The set $X_E$ of all efficient solutions ($X_{wE}$ of all weakly efficient solutions)
and the set $Y_N$ of all nondominated outcome vectors ($Y_{wN}$ of all weakly nondominated outcome vectors)
are referred to as the \emph{efficient set (weakly efficient set)} and the \emph{nondominated set (weakly nondominated set)},
respectively.
\end{Def}

The nondominated set is bounded by two particular vectors:
the ideal point $y^I$ with $y^I_k = \min_{y\in Y_N} y_k$ and
the nadir point $y^N$ with $y^N_k = \max_{y\in Y_N} y_k$,
for $k\in \{1, \dots, p\}$.

Several classes of  efficient solutions can be distinguished.

\begin{itemize}
\item {\em Supported} efficient solutions are optimal solutions of a
  weighted sum single objective problem

  \parbox{0.8\textwidth}{
  $$\min \{ \lambda_1 z_1(x) + \ldots + \lambda_p z_p(x): x\in X\}$$
  }\hfill \parbox{0.1\textwidth}{\hfill (MOIP$_\lambda$)} 

  for some weight $\lambda \in \mathbb{R}^p_>$. Their images in the objective
  space are supported nondominated points. We use the notations $X_{SE}$
  and $Y_{SN}$, respectively. All supported nondominated points are
  located on the boundary of the   convex hull of $Y$ ($\conv Y$),
  i.e., they are nondominated  points of $(\conv Y) + \mathbb{R}_\geqq^p.$ 
  
  \item {\em Nonsupported} efficient solutions are efficient solutions
  that are not optimal solutions of (MOIP$_{\lambda}$) for any $\lambda
  \in \mathbb{R}^p_>$. Nonsupported nondominated points  
  are located in the interior of the convex hull of $Y$. 
\end{itemize}

In addition we can distinguish two classes of supported efficient
solutions, namely
\begin{itemize}
\item supported efficient solutions $x$ whose objective vectors $z(x)$
  are located on the vertex set of $\conv Y$ (we call these \emph{extremal
  supported} efficient solutions, $X_{SE1}$, and \emph{nondominated extreme
  points}, $Y_{SN1}$, respectively) and  
\item those supported efficient solutions $x \in X_{SE}$ for which $z(x)$ is located in the relative
  interior of a face  of $\conv Y$. For such a solution $x$ there
  exist $2\leq \ell\leq p$ extremal supported efficient solutions 
  $x^1, \dots, x^{\ell}$ 
  satisfying $z(x^i) \neq z(x^j)$ for all $i, j \in \{1, \dots, \ell\}$, $i \neq j$,
  and $\alpha \in \mR^{\ell}_>$ with $\sum_{k=1}^{\ell} \alpha_k = 1$
  such that $z(x) = \sum_{k=1}^{\ell} \alpha_k z(x^k)$.
	The corresponding sets of \emph{non-extremal supported} efficient solutions and 
  their outcome vectors are  denoted by  $X_{SE2}$ and $Y_{SN2}$,
  respectively.  
\end{itemize}

For a given subset $S$ of the objective space $\mR^p$, we denote by $S_N$ 
the set of all nondominated points relatively to $S$, 
i.e. $S_N = \{y \in S\, :\, \nexists y'\in S, y' \leq y\}$, and by $S_{wN}$ the set of all weakly nondominated points relatively to $S$, i.e. $S_{wN} = \{y \in S\, :\, \nexists y'\in S, y' < y\}$. 
Let $F$ be a face of a convex polytope $P\subseteq \mR^p$ of dimension $p$. 
We say that $F$ is \emph{nondominated} if $F \subseteq P_N$, and that $F$ is \emph{weakly nondominated} if $F \subseteq P_{wN}$.
If $F$ is a facet, 
i.e.\ a $(p-1)$-dimensional polytope,
then $F$ is nondominated (weakly nondominated) if, and only if, every  
normal vector $\lambda$
to $F$ pointing to the interior of $P$ is such that $\lambda_k > 0$ ($\lambda_k \geq 0$) in minimization problems, and $\lambda_k < 0$ ($\lambda_k \leq 0$) in maximization problems, for all $k \in  \{1, \dots, p\}$. A \emph{maximal} nondominated face is a nondominated face that is not contained in any other nondominated face.

The purpose of this paper is to develop a simple and at the same time efficient dichotomic search algorithm that determines
one efficient solution for each nondominated extreme point,
or equivalently, to generate all the nondominated extreme points in the
outcome set of a multi-objective integer programme. 

When $p=2$ this
reduces to solving a sequence of single objective problems
(MOIP$_\lambda$) with $\lambda\in\mR^2_>$, because the ``natural'' order of nondominated points (i.e. $y_1^r < y_1^s$ implies $y_2^s < y_2^r$) allows
us to search by dichotomy. This dichotomic search provides the
appropriate values of  $\lambda$ in a straightforward way, see
e.g.\ \citet{cohon78}, who calls the procedure ``noninferior set
estimation method'', or \citet{aneja79}. An extension of this procedure to dimensions $p>2$ was stated as a major challenge in \citet{MEXG2002}, and has remained unsolved until recently with the methods proposed by \citet{3SE}, \citet{MSOzpKok} and \citet{boekler15},
which can be applied to compute the nondominated extreme points of any MOIP. 
The proof of their correctness explicitly or implicitly relies on a decomposition of the weight set, i.e., the set of all relevant
weights for the considered weighted sum scalarizations (MOIP$_\lambda$). Using a dual interpretation, \citet{boekler15} show in addition that, if the respective weighted sum scalarizations can be solved in polynomial time, their method runs in output polynomial time (with respect to the number of nondominated extreme points computed) for every fixed number $p$ of objectives. 

The iterative solution of weighted sum scalarizations and
the corresponding decomposition of the weight set that comprises all relevant weights 
play an important role also in the context of multi-objective linear programmes (MOLP), where $x \in \mZ^n$ is replaced by $x \in \mR^n$ in (\ref{feasible}). 
It is well known that every efficient solution of (MOLP) is supported,
\citep[see][]{iser74,Steuer}. 
Algorithms for the solution of MOLPs are available since the seventies with the first kind of methods being multicriteria simplex algorithms \citep[see, for example,][and references therein]{EhrgottWiecekChapter}. 
More recently, another stream of research has been proposed: enumerating points in the objective space $\mR^p$ rather than solutions in the decision space $\mR^n$. \citet{benson97,benson98a} has argued that this is advantageous because in general $p$ is much smaller than $n$. 
Weight set decomposition has been used to prove the correctness of both kinds of methods \citep[see][for the multi-objective simplex algorithm]{yuzeleny}. Algorithms computing a weight set decomposition by determining simultaneously the set of nondominated extreme points of an MOLP have also been proposed. For example, \citet{benson2002} use a decomposition of the weight set described in \citet{benson2000}. 
Weight set decomposition has also been the foundation for the extension of the primal-dual simplex algorithm to the multi-objective case, see \citet{Ehrgott:2007}. 
A dual variant of Benson's outer approximation algorithm  is suggested in \citet{ehrg:adua:2012}. It is based on the formulation of dual problems that include information on weights describing the facets of the primal polyhedron \citep[see][]{heyd:geom:2008,hame:bens:2014}, i.e., on the weight set decomposition. Since weight cells correspond to nondominated extreme points in the objective space, primal dual methods for MOLPs imply a double description of the nondominated set. This interrelation has beeen used in \cite{csirmaz2018inner} to further refine primal dual methods for MOLPs by combining a combinatorial enumeration strategy for the nondominated extreme points (yielding a provably best possible iteration count) with tailored single-objective LP-solver calls.  Note that the approach of \citet{heyd:geom:2008} and \citet{hame:bens:2014} can also be viewed as the basis of the work of \citet{boekler15} for multi-objective combinatorial optimization problems mentioned above. 

In a different line of research, 
there exists a large variety of methods whose purpose is to \emph{approximate} the set of nondominated points of convex or non-convex multi-objective problems \citep[see][for a review]{RW05}. In particular, methods for the approximation of multi-objective convex problems can be used for the approximation of the set of nondominated extreme points of 
MOIPs, see, for example, \citet{SKW02} and \citet{RenDamHer11}.
For many of these methods, an exact representation containing all nondominated extreme points can be obtained by driving the quality indicator to zero.
Even if it is not the original purpose of such approximation methods, this shows that 
many of them can be applied to compute the set of nondominated extreme points of MOIPs. 

In the following, we will combine ideas from approximation methods with the approaches of \citet{3SE}  and \citet{MSOzpKok} to obtain a simple and efficient dichotomic scheme for the exact computation of all nondominated extreme points of MOIPs. Note that we could also make the reverse step, i.e., by combining our new algorithm with appropriate quality indicators, it can be easily converted into a method to approximate the convex hull of the nondominated set of an MOIP.

Since dichotomic search algorithms rely on the repeated solution of weighted sum scalarizations (MOIP$_\lambda$), it is reasonable to suppose for a practical application that the single-objective problems (MOIP$_\lambda$) can be solved efficiently. Therefore, the method we propose would most likely be applied to multi-objective combinatorial optimization problems. 
As in \citet{3SE},  we will use the assignment
 and knapsack problems with $p$ objectives, respectively ($p$AP) and ($p$KP), to illustrate and test our algorithms. 
The multi-objective assignment problem can be stated as

\parbox{0.8\textwidth}{
$$
\begin{array}{rcrcll}
 \min z_k(x) &  = &  \displaystyle {\sum_{i=1}^n \sum_{j=1}^n c_{ij}^k
    x_{ij}}  &    &  & k=1,\ldots,p  \\ 
 & & \displaystyle { \sum_{i=1}^n x_{ij}} & = & 1 & j=1, \ldots, n \\ 
 & & \displaystyle { \sum_{j=1}^n x_{ij}} & = & 1 & i=1, \ldots, n \\ 
 & & x_{ij} & \in & \{0,1\} & i,j = 1, \ldots, n 
 \end{array}
$$
}
\hfill
\parbox{0.1\textwidth}
{\hfill ($p$AP)}
where all objective function coefficients $c_{ij}^k$ are non-negative
integers and $x = (x_{11},\dots,x_{nn})^T\in\{0,1\}^{n^2}$ is the vector of decision 
variables. The multi-objective knapsack problem is given by

\parbox{0.8\textwidth}{
$$
\begin{array}{cllr}
\max z_k(x) & = & \displaystyle{\sum_{i = 1}^n c_i^k x_i} & k = 1,\ldots,p\\
& & \displaystyle{\sum_{i = 1}^n w_ix_i \leq \omega} & \\
& & x_i \in \{0,1\} & i = 1,\ldots,n\\
\end{array}
$$
}
\hfill
\parbox{0.15\textwidth}
{\hfill ($p$KP)}
where all objective function coefficients $c_i^k$ are non-negative
integers, $w_i$ and $\omega$ are positive integers, and $x = (x_1,\dots,x_n)^T\in\{0,1\}^n$ is the vector of decision
variables.  

The remainder of the paper is structured as follows. 
We review the bi-objective dichotomic scheme and the difficulties of its extension to the multi-objective case in Section~\ref{sec:lit2obj}. The methods proposed for the determination of nondominated extreme points of multi-objective (mixed-)integer linear programmes are reviewed in Section~\ref{sec:litpobj}. Solution methods for computing a convex approximation of multi-objective optimization problems are reviewed in Section~\ref{sec:litapprox}. New developments extending ideas proposed in the literature are proposed in Section~\ref{newMethod}, and these developments are immediately used to define two new solution methods. Finally, experimental results are provided on multi-objective assignment and knapsack problems in Section~\ref{sec:exp} and show the practical efficiency of the proposed methods.

\section{Bi-objective dichotomic scheme and difficulties in its extension to the multi-objective case}
\label{sec:lit2obj}  

The bi-objective dichotomic scheme has been designed using specific properties of the bi-objective case. As a consequence, its extension to the multi-objective case is not obvious.

\subsection{Classical dichotomic scheme in the bi-objective case}

The dichotomic scheme is based on the consideration of consecutive supported nondominated points $y^r$ and $y^s$ with respect to one objective, i.e. $y^r_1
< y^s_1$ and $y^r_2 > y^s_2$. A weighted sum problem (MOIP$_\lambda$)
with $\lambda_{1} = y^r_2 - y^s_2 > 0$ and $\lambda_{2} = y^s_1 - y^r_1 > 0$
is solved to find new supported points located ``between'' $y^r$ and $y^s$.
The vector $\lambda$ corresponds to a normal to the line joining $y^r$ and $y^s$, as illustrated in Figure \ref{normal1} with a negative multiple of $\lambda$ to highlight the optimization sense. Following the solution of this weighted sum problem, a supported nondominated point $y^t$ is obtained and two cases are possible.
\begin{enumerate}[(i)]
\item If $\lambda^Ty^t < \lambda^Ty^r$, then $y^t$ is necessarily a new supported nondominated point and two new problems (MOIP$_\lambda$) have to be solved, one with $\lambda$ defined by $y^r$ and $y^t$ and one with $\lambda$
  defined by $y^t$ and $y^s$ (see Figure \ref{normal2}).   
\item If $\lambda^Ty^t = \lambda^Ty^r$, then the search stops, and $[y^r,y^s]$ is a part of an edge of $(\conv Y_{SN})_N$. 
\end{enumerate}
This scheme is initialized with nondominated points minimizing respectively the first and the second objectives, 
and is usually implemented recursively. More detailed descriptions can
be found in \citet{aneja79} and \citet{cohon78}.  

\begin{figure}[htb] 
\begin{center}
  \begin{minipage}[t]{0.45\textwidth}
  \includegraphics[width=45mm]{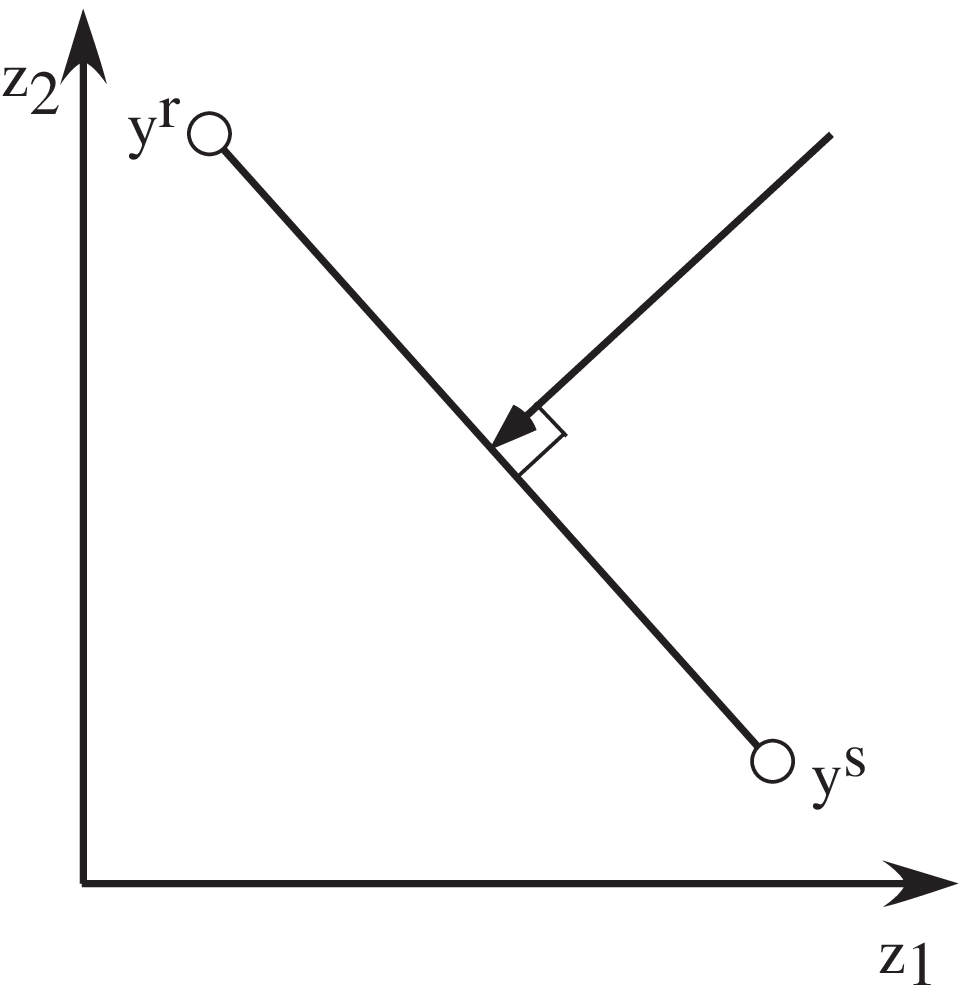}
  \caption{Problem (MOIP$_\lambda$) defined by $y^r$ and $y^s$.}
  \label{normal1} 
  \end{minipage}
  \hfill
  \begin{minipage}[t]{0.45\textwidth}
  \includegraphics[width=45mm]{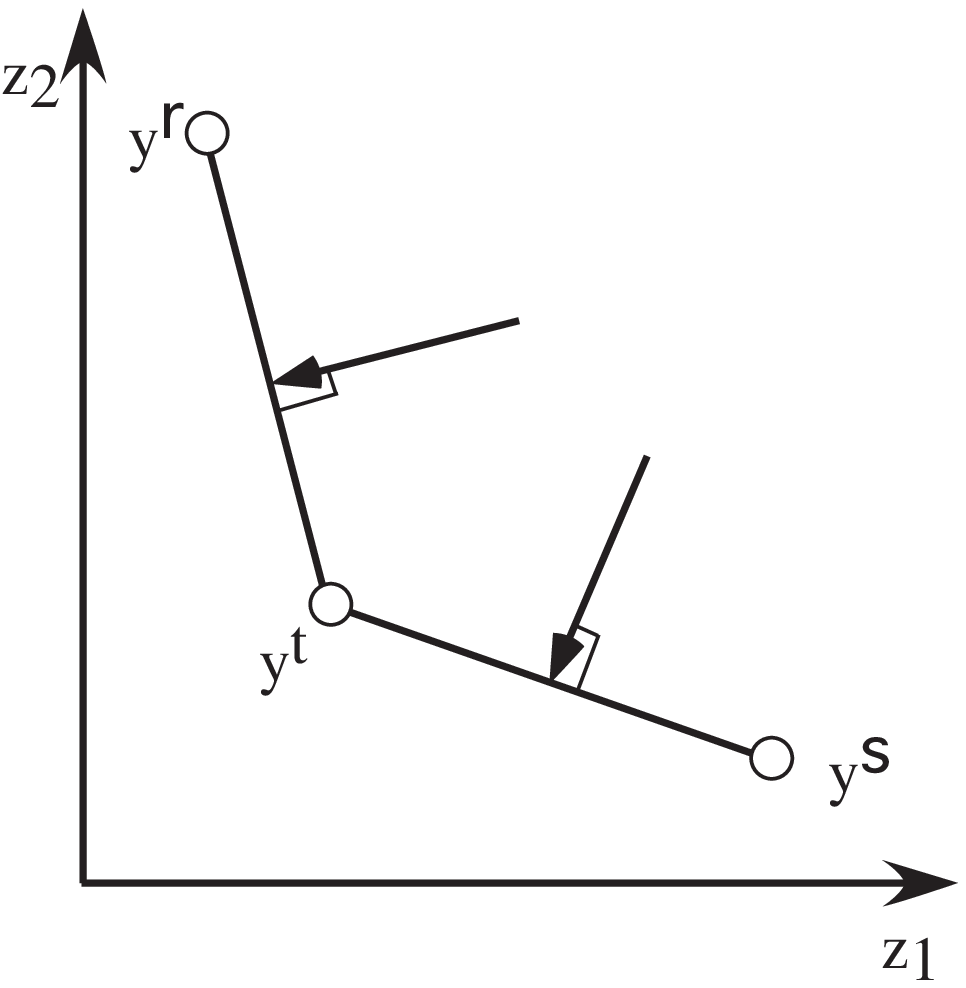}
  \caption{$\lambda^Ty^t < \lambda^Ty^r$, therefore two new
    problems (MOIP$_\lambda$) have to be solved.} 
  \label{normal2}
  \end{minipage}
\end{center}
\end{figure}

\subsection{Difficulty in the extension to the multi-objective case} \label{subsec:difficulties}

The dichotomic scheme is not immediate to generalize to more than two
objectives, because as illustrated in Figures \ref{normal1} and
\ref{normal2}, it relies on the natural order of nondominated points
in the objective space, i.e.\ that $y_1^r < y_1^s$ implies $y_2^s < y_2^r.$
The absence of this natural order causes the following difficulties when $p > 2$ \citep{3SE}. 
\begin{itemize}
\item To define a hyperplane in $\mR^p$,  $p$ points are
  necessary. But there might be more than $p$ different nondominated
  extreme points (at most $p!$ lexicographically optimal points)
  resulting from the initial single objective optimizations. It is
  unclear which points to choose to define a hyperplane to start the
  procedure. 
\item Even if the initialization yields exactly $p$ initial points, the
  normal to a hyperplane defined by $p$ nondominated points does not
  necessarily have positive components. Then the optimization  of
  (MOIP$_\lambda$) does not necessarily yield other supported
  nondominated points.  
\end{itemize}
The following example has been used in \citet{3SE} to illustrate how the dichotomic scheme can fail.

\begin{ex}\label{diff} Consider an instance of the assignment problem with
  three objectives \citep{2phrec} where 
$$
C^1 = \begin{pmatrix}
3 & 6 & 4 & 5\\
2 & 3 & 5 & 4\\
3 & 5 & 4 & 2\\
4 & 5 & 3 & 6\\
\end{pmatrix}
\hbox{, }
C^2 = \begin{pmatrix}
2 & 3 & 5 & 4\\
5 & 3 & 4 & 3\\
5 & 2 & 6 & 4\\
4 & 5 & 2 & 5\\
\end{pmatrix}
\hbox{, and }
C^3 = \begin{pmatrix}
4 & 2 & 4 & 2\\
4 & 2 & 4 & 6\\
4 & 2 & 6 & 3\\
2 & 4 & 5 & 3\\
\end{pmatrix}.
$$ 

Let us try to apply the usual dichotomic scheme to this instance. 
The three single objective assignment problems with objective
coefficients $C^k$, $k=1,2,3$, yield three points:
\begin{description}
\item[$k=1$:] $x^1$ with $x_{11} = x_{22} = x_{34} = x_{43} = 1$ and
  objective vector $y^1 = (11,11,14)^T$ is the unique optimal solution 
  minimizing the first objective. 
\item[$k=2$:] $x^2$ with $x_{11} = x_{24} = x_{32} = x_{43} = 1$ and
  objective vector $y^2 = (15,9,17)^T$ is the unique optimal solution
  minimizing the second objective. 
\item[$k=3$:] $x^3$ with $x_{14} = x_{23} = x_{32} = x_{41} = 1$ and
  objective vector $y^3 = (19,14,10)^T$ is the unique optimal solution
  minimizing the third objective. 
\end{description}

The normal to the plane defined by $y^1$, $y^2$ and $y^3$
is either $\lambda = (1,-40,-28)^T$ or $\lambda = (-1,40,28)^T$. In
both cases not all components are  positive. Solving
{\rm (MOIP$_{(1,-40,-28)}$)} we get the dominated point $y^d = (16,20,16)^T$ 
for $x^d$ with $x_{14} = x_{21} = x_{33} = x_{42} = 1$ as optimal
solution.  Solving {\rm (MOIP$_{(-1,40,28)}$)} we obtain $y^1$, $y^2$ or $y^3$. 
Therefore, the dichotomic scheme stops without finding any further
supported nondominated points. However, $y^4 = (13,16,11)^T$ for $x^4$ with
$x_{13} = x_{22} = x_{34} = x_{41} = 1$ is a supported nondominated
point that can be found solving {\rm (MOIP$_\lambda$)} with $\lambda = (1,1,3)^T$. 
\end{ex}
Consequently, a straight-forward transposition of the bi-objective case is not possible.
\section{Methods for the multi-objective case}\label{sec:litpobj}

Three methods have been proposed for the computation of the set of nondominated extreme points of MOIP. 

\subsection{The method by \citet{3SE}}\label{subsec:3SE}

The method by \citet{3SE} is based on the computation of a weight set decomposition. 
The weight set $W^0$ is defined by 
\begin{equation}\label{eq:W0} 
W^0 := \left \{\lambda \in \mR^p: \lambda_k >  0 \mbox{ for } k \in 
  \{1,\ldots,p\}, \lambda_p  =  1 - \sum_{k = 1}^{p-1}
  \lambda_k 
  \right\}, 
\end{equation} 
and can be seen as a normalized weight space. $W^0$ is a polytope of dimension $p-1$ and in particular, it is bi-dimensional in the three-objective case (very helpful for illustration purposes).

Given a supported nondominated point $y$, the set $W^0(y)$ is defined by
\begin{equation}\label{eq:W0y}
W^0(y) := \left\{ \lambda \in W^0 : \lambda^Ty = 
  \min\left\{ \lambda^Ty': y' \in \conv Y \right\} \right\}
\end{equation} 
and corresponds to the subset of weights $\lambda\in W^0$ for which $y$ is the image of an optimal solution of (MOIP$_\lambda$).
The method by \citet{3SE} is based on the following results.

\begin{prop}[\citealp{3SE}]\label{compW0(y)} Let $y$ be a supported nondominated point. 
\begin{enumerate}
\item $W^0(y) = \{\lambda  \in W^0 : 
 \lambda^Ty \leq \lambda^Ty' 
 \mbox{ for all } y' \in Y_{SN1} \setminus \{y\}\}.$  
\item $W^0(y)$ is a convex polytope.
\item Nondominated point $y$ is a nondominated extreme point of $\conv Y$ if
and only if $W^0(y)$ has dimension $p-1$. 
\item $W^0 = \bigcup_{y \in Y_{SN1}} W^0(y).$  
\item Let $S$ be a set of supported nondominated points. Then $$Y_{SN1}
\subseteq S \Longleftrightarrow W^0 = \bigcup_{y \in S} W^0(y).$$ 
\end{enumerate}
\end{prop}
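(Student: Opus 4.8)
My plan is to prove the five items essentially in the given order, since the later items build on the earlier ones. The unifying principle is that for a supported nondominated point $y$, belonging to $W^0(y)$ means precisely that $\lambda$ is a weight for which $y$ solves (MOIP$_\lambda$), and that the defining inequality $\lambda^Ty=\min\{\lambda^Ty':y'\in\conv Y\}$ can be tested against only finitely many competitors.

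For item~1, I would argue the inclusion $\subseteq$ trivially from the definition (\ref{eq:W0y}), since $Y_{SN1}\subseteq Y\subseteq\conv Y$. For $\supseteq$, I would use the fact that the linear function $\lambda^T(\cdot)$ attains its minimum over the polytope $\conv Y$ at a vertex, and every vertex of $\conv Y$ is by definition a nondominated extreme point, i.e.\ lies in $Y_{SN1}$; hence dominating all of $Y_{SN1}$ forces $\lambda^Ty$ to be the true minimum over $\conv Y$. For item~2, I would observe that item~1 writes $W^0(y)$ as the intersection of $W^0$ (itself the intersection of half-spaces and a hyperplane, hence a polytope) with the finitely many half-spaces $\{\lambda:\lambda^T(y-y')\leqq 0\}$ for $y'\in Y_{SN1}\setminus\{y\}$, so $W^0(y)$ is a polyhedron contained in the bounded set $W^0$, hence a convex polytope.

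Item~3 is the heart of the statement and the step I expect to be the main obstacle, because it connects the \emph{dimension} of a weight cell to the \emph{geometric status} of $y$ as a vertex. I would prove both directions by contraposition. If $y$ is supported but not a vertex of $\conv Y$, then $y$ lies in the relative interior of some face of dimension $\geqq 1$ spanned by other supported points; any $\lambda$ minimizing over $\conv Y$ at $y$ must then also minimize at those points, which pins $\lambda$ down to a lower-dimensional set of normals, forcing $\dim W^0(y)\leqq p-2$. Conversely, if $y$ is a vertex, the normal cone to $\conv Y$ at $y$ is full-dimensional in $\mR^p$; intersecting this full-dimensional cone of normals with the hyperplane $\lambda_p=1-\sum_{k<p}\lambda_k$ and the strict-positivity constraints defining $W^0$ yields a set of dimension $p-1$ (the strict positivity is available because $y$ is nondominated, so a strictly positive normal exists, and openness of $\mR^p_>$ preserves full dimension locally). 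The delicate point is handling the relative (rather than absolute) interior carefully and confirming that the positivity constraints do not inadvertently cut the cell below dimension $p-1$.

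For item~4, the inclusion $\bigcup_{y}W^0(y)\subseteq W^0$ is immediate, and for the reverse I would take any $\lambda\in W^0$, note that $\lambda\in\mR^p_>$ makes (MOIP$_\lambda$) attain its minimum over $\conv Y$ at some vertex $y\in Y_{SN1}$, whence $\lambda\in W^0(y)$. Finally, item~5 follows by combining items~3 and~4: the forward direction is immediate from item~4 restricted to $S$. For the reverse, I would suppose for contradiction that some $\bar y\in Y_{SN1}\setminus S$ exists; by item~3 its cell $W^0(\bar y)$ has full dimension $p-1$, yet the covering $W^0=\bigcup_{y\in S}W^0(y)$ by the finitely many cells $W^0(y)$ (each of dimension $\leqq p-1$) would have to cover the relative interior of $W^0(\bar y)$; a dimension/measure argument then shows $\bar y$ must coincide in cell with some $y\in S$, and using that distinct extreme points have cells meeting only in their relative boundaries yields the contradiction, so $Y_{SN1}\subseteq S$.
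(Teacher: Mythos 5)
The paper itself gives no proof of this proposition: it is imported verbatim from \citet{3SE} (the reference attached to the statement), so there is no in-paper argument to compare yours against; I can only assess the proposal on its own terms. Your overall plan --- characterise $W^0(y)$ by finitely many inequalities against $Y_{SN1}$, read off polytopality, relate the dimension of the weight cell to $y$ being a vertex via the normal cone, and obtain the covering and the optimality criterion from a dimension/overlap argument --- is the standard route, and items 2, 3, 4 and 5 are essentially sound as sketched (the intersection of two distinct cells lies in $\{\lambda:\lambda^T(y-\bar y)=0\}$, which meets $W^0$ in dimension at most $p-2$, which is exactly what your item~5 needs).

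There is, however, one genuinely false step, in your proof of item 1. You justify the inclusion $\supseteq$ by asserting that ``every vertex of $\conv Y$ is by definition a nondominated extreme point, i.e.\ lies in $Y_{SN1}$.'' This is not true: $Y_{SN1}$ consists only of the \emph{nondominated} vertices of $\conv Y$, and $\conv Y$ generally has dominated vertices as well (the paper's own Example~\ref{diff} produces the dominated point $y^d=(16,20,16)^T$ as the optimum of a weighted sum with mixed-sign weights, so it lies on a dominated part of the boundary of $\conv Y$ whose vertices are not in $Y_{SN1}$). If the minimum of $\lambda^T(\cdot)$ over $\conv Y$ were attained only at such dominated vertices, dominating all of $Y_{SN1}$ would not force $\lambda^Ty$ to equal the true minimum. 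The statement is rescued not by this claim but by the hypothesis $\lambda\in W^0\subset\mR^p_>$: for a strictly positive weight every minimiser of $\lambda^T(\cdot)$ over $\conv Y$ is nondominated, and at least one minimiser is a vertex, hence lies in $Y_{SN1}$; this is precisely the (correct) argument you use for item 4, and it should replace the quoted sentence. With that repair, and modulo the routine caveat that the minimum over $\conv Y$ is attained at a vertex (which requires $\conv Y$ to be a pointed polyhedron on which $\lambda^T(\cdot)$ is bounded below, guaranteed here by the integrality of the data and the existence of the ideal point), your proof goes through. Note also that, as the paper remarks in Section~\ref{furana}, the sets $W^0(y)$ need not be closed since $W^0$ is relatively open, so ``convex polytope'' in item 2 must be read in that loose sense.
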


\begin{Def}[\citealp{3SE}]\label{defadj}
Two nondominated extreme points $y^1$ and $y^2$ are called {\em adjacent} if
and only if their common facet $W^0(y^1) \cap W^0(y^2)$ in the weight set is a polytope of dimension $p - 2$.
\end{Def}

Given the set of nondominated extreme points $Y_{SN1}$, the decomposition of $W^0$ in sets $\{W^0(y)\; : \, y \in Y_{SN1}\}$ is given by Proposition \ref{compW0(y)}(4). 
Figure \ref{Decomp} illustrates this decomposition for the case of Example \ref{diff}.

\begin{figure}
\center
\includegraphics[width = 0.5\textwidth]{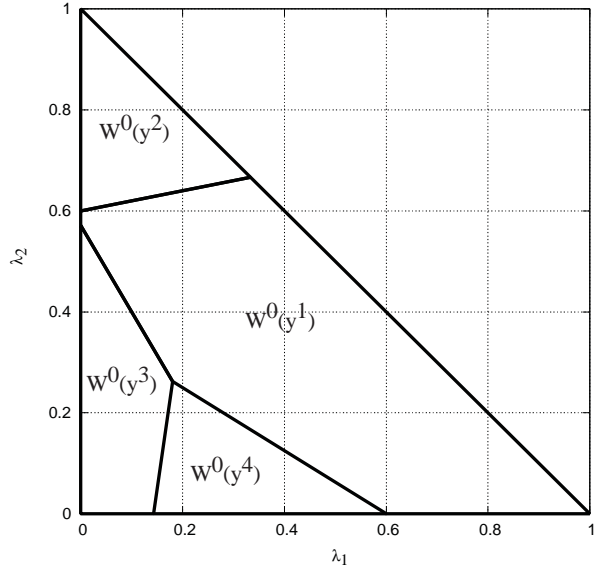}
\caption{Weight set decomposition for Example \ref{diff}}
\label{Decomp}
\end{figure}

Proposition \ref{compW0(y)}(1) provides a way to compute the sets $W^0(y)$ knowing the set $Y_{SN1}$. However, $Y_{SN1}$ is not known at the beginning of an iterative algorithm since it is the set that should be computed simultaneously with the weight set decomposition. 
\citet{3SE} have thus proposed to consider the directly computable sets 
$$W^0_S(y) := \{\lambda  \in W^0 : 
 \lambda^Ty \leq \lambda^Ty' 
 \mbox{ for all $y' \in S\setminus\{y\}$}\}$$
for subsets of supported nondominated points $S\subseteq Y_{SN}$.
Similar to the dichotomic scheme for the bi-objective case, a subset of supported nondominated points $S \subseteq Y_{SN}$ (containing possibly non-extreme points) is known at each step of the algorithm. 
Proposition \ref{compW0(y)}(5) is used as an optimality condition in the algorithm, in the sense that it allows to show that $Y_{SN1} \subseteq S$.
At any step of the algorithm, $W^0(y) \subseteq W^0_S(y)$ for all $y \in S$ and $\bigcup_{y \in S} W^0_S(y) = W^0$. Each new explored supported nondominated point can be used to update the sets $W^0_S(y)$. At termination of the algorithm, $W^0_S(y) = W^0(y)$ for all $y \in S$ and we have thus $Y_{SN1} \subseteq S$ by application of 
Proposition \ref{compW0(y)}(5). 

The principle of the method is to consider each known supported point $y \in S$, and to show either that $W^0_S(y) = W^0(y)$ or to identify new supported nondominated points allowing next an update of $W^0_S(y)$. To show that $W^0_S(y) = W^0(y)$ (or not) is done by showing that all facets of both polytopes are (not) common. 

More precisely, given two nondominated extreme points $y^1, y^2 \in S$ that are adjacent w.r.t.\ $S$, the question is whether the common facet $W^0_S(y^1) \cap W^0_S(y^2)$ of $W^0_S(y^1)$ and $W^0_S(y^2)$ is also the common facet $W^0(y^1) \cap W^0(y^2)$ of $W^0(y^1)$ and $W^0(y^2)$ or not. The facet $W^0_S(y^1) \cap W^0_S(y^2)$ is investigated in order to identify new supported points or to confirm that $W^0_S(y^1) \cap W^0_S(y^2) = W^0(y^1) \cap W^0(y^2)$. In particular if $p = 3$,  $W^0_S(y^1) \cap W^0_S(y^2)$ is an edge and its investigation is realized by the computation of the nondominated extreme points of a bi-objective problem. 
If $p \geq 4$, nondominated extreme points of $(p-1)$-objective problems must be computed, which is implemented by a recursive application of the method.


\subsection{The method by \citet{boekler15}}

Motivated by the recent work on a dual Benson's algorithm \citep[see][]{ehrg:adua:2012,hame:bens:2014}, \citet{boekler15} take a dual perspective on the weight set decomposition of \citet{3SE}. The following interpretation is based on these references and adapted to the notation from Section \ref{subsec:3SE}. Let
\begin{equation*}
\cl W^0  =
  \left \{\lambda \in \mR^p: \lambda_k \geq  0 \mbox{ for } k \in 
  \{1,\ldots,p\}, \lambda_p  =  1 - \sum_{k = 1}^{p-1}
  \lambda_k \right\} = \left\{ \lambda \in\mR^p_\geqq \,:\, \| \lambda\|_1 = 1 \right\}
\end{equation*} 
be the closure of $W^0$ (see \eqref{eq:W0}), that includes weights with components equal to $0$. For $\bar{\lambda}\in\cl W^0$, consider the linear programming relaxation of (MOIP$_{\bar{\lambda}}$) given by
$$ \min\{ \bar{\lambda}^T Cx \,:\, Ax=b,\, x\geqq 0\}
$$
and the corresponding dual linear program
$$ \max\{ b^T u\,:\, A^T u \leqq C^T \bar{\lambda} \}.
$$
Then we can associate with every weight $\bar{\lambda}:=(\lambda_1,\ldots,\lambda_p,1-\sum_{k=1}^{p-1}\lambda_k)^T\in \cl W^0$ (that is uniquely determined by its first $(p-1)$ components) a set of dual feasible solutions $\{ u\in\mR^m \,:\, A^T u \leqq C^T \bar{\lambda} \}$ with corresponding dual objective values $z=b^Tu$.
This information is comprised in the dual polyhedron 

\parbox{0.9\textwidth}{
\begin{eqnarray*}
\mathcal{D} := \left\{ (\lambda_1,\dots,\lambda_{p-1},z=b^Tu)^T\in\mR^p \right. & : &
\bar{\lambda}:=(\lambda_1,\ldots,\lambda_p,1-\sum_{k=1}^{p-1}\lambda_k)^T\in \cl W^0\\
&& \left. \text{~and~} A^T u \leqq C^T\bar{\lambda}\}
 \right\}
\end{eqnarray*}
}\parbox{0.065\textwidth}{\hfill (4)}

\noindent introduced in \citet{heyd:geom:2008}.
We are particularly interested in maximizing $z=b^Tu$ for different scalarizations $\bar{\lambda}$, that is, we are interested in the upper envelope of $\mathcal{D}$ w.r.t.\ the last component. In other words, we are looking for a maximal subset of $\mathcal{D}$ w.r.t.\ the cone
$K_p:=\{ (0,\ldots,0,z)^T\in\mR^p\,:\, z\geqq 0\}$. Using linear programming duality, this $K_p$-maximal subset of $\mathcal{D}$ can be written as

\parbox{0.9\textwidth}{
\begin{eqnarray*}
K_p\!-\!\max\, \mathcal{D} = \left\{ (\lambda_1,\dots,\lambda_{p-1},z)^T\in\mR^p \right. & : &
\bar{\lambda}:=(\lambda_1,\ldots,\lambda_{p-1},1-\sum_{k=1}^{p-1}\lambda_k)^T\in \cl W^0\\
&& \left. \text{~and~} z=\min\{\bar{\lambda}^Ty : y \in \conv Y\}
 \right\}.
\end{eqnarray*}
}\parbox{0.065\textwidth}{\hfill (5)}
where in this context, $Y = \conv Y = \{Cx \,:\, Ax=b,\, x\geqq 0\}$.

\noindent \citet{heyd:geom:2008} showed that the $K_p$-maximal facets of $\mathcal{D}-K_p$ correspond to the (weakly) nondominated extreme points of $(\conv Y) + \mR^p_{\geqq}$ (note that all extreme points of $(\conv Y)+\mR^p_{\geqq}$ are in fact nondominated), i.e., to the nondominated extreme points in $Y_{SN1}$, and vice versa. Note that this observation is also reflected in Proposition \ref{compW0(y)}(3), i.e., there is a one-to-one correspondence between the $K_p$-maximal facets of $\mathcal{D}-K_p$ and the sets $W^0(\bar{y})$ with $\bar{y}\in Y_{SN1}$. Note also that 
$\bar{y}$ remains optimal for all weights $\bar{\lambda}=(\lambda_1,\ldots,\lambda_p,1-\sum_{k=1}^{p-1}\lambda_k)^T\in \cl W^0(\bar{y})$ and thus $\bar{z}=\bar{\lambda}^T\bar{y}$ changes linearly with $\bar{\lambda}$ on 
$\cl W^0(\bar{y})$. Consequently, there is also a one-to-one correspondence between the extreme points of $\mathcal{D}-K_p$ and the extreme points of the closure of the weight sets $\cl W^0(\bar{y})$, $\bar{y}\in Y_{SN1}$. 

From an algorithmic point of view, this dual interpretation gives rise to an alternative way to (implicitly) compute the weight set decomposition.  Following the description in \citet{boekler15}, the dual method also works with a partial list $S$ of nondominated extreme points of $\conv Y$ that define a subset of the facet describing inequalities of $\mathcal{D}-K_p$.  
In order to test whether an extreme point $(\lambda_1,\dots,\lambda_{p-1},z)^T$ of the current intermediate dual polyhedron (given by the facet describing inequalities) is also an extreme point of $\mathcal{D}-K_p$, a weighted sum problem with weight $(\lambda_1,\ldots,\lambda_p,1-\sum_{k=1}^{p-1}\lambda_k)^T$ is solved. \citet{boekler15} suggest to apply a lexicographic weighted sum scalarization in order to avoid non-extreme supported efficient solutions and also dominated solutions in the case of weights with components equal to $0$. If the optimal objective value equals $z$, the above question is answered positively, and otherwise a new facet describing inequality is found and added to the intermediate description of $\mathcal{D}-K_p$. 

\citet{boekler15} noticed that this algorithm can also be applied to multi-objective combinatorial optimization problems. Indeed, for all combinatorial optimization problems (and more generally for all mixed-integer linear programmes), there exists a linear programming formulation, which could be used in the definition of the dual Polyhedron (4).  We emphasize that to know the linear formulation of the problem is in fact not necessary, since only optimal objective values of weighted sum problems are required in the method, and these optimal values do not depend on the formulation of the problem, as can be seen for example in formulation (5).

In comparison to the algorithm of \citet{3SE}, the dual method avoids the time-consuming analysis of common facets of weight sets between pairs of nondominated extreme points, c.f.\ Section~\ref{subsec:3SE}. 

Under the assumptions that the number of objective functions $p$ is fixed, the weighted sum scalarizations (MOIP$_\lambda$) can be solved in polynomial time, and the polytope $\cal D$ is computed with a dual algorithm to a statical convex hull algorithm \citep[e.g.][]{Chazelle93}, \citet{boekler15} show that the dual Benson method runs in output polynomial time, i.e. its running time is bounded by a polynomial in the input and the output size. Moreover, if the lexicographic weighted sum problems can still be solved in polynomial time, the dual Benson method runs in incremental polynomial time, i.e., the $k$-th delay (the running time between the output of the $k$-th and the $(k+1)$-st solution) is bounded by a polynomial in the input and $k$.

\subsection{The method by \citet{MSOzpKok}}

The method by \citet{MSOzpKok} is performed directly in the objective space. In order to compute all nondominated extreme points, \citet{MSOzpKok} consider the convex hull $\conv Y$ of the outcome set $Y$, and in particular $(\conv Y)_N$ the {\em nondominated frontier}. As $\conv Y$ is a polytope,  \citet{MSOzpKok} define the nondominated frontier as the union of all of its nondominated faces. However, if in the bi-objective case all maximal nondominated faces are nondominated edges, with $p$ objectives there can be maximal nondominated faces of dimension 1, 2,$\ldots$, $p - 1$. \citet{MSOzpKok} illustrate this fact in the three-objective case using the numerical instance of Example \ref{diff}. There is indeed one maximal nondominated face of dimension 2 (facet) given by the weight defined by the intersection of the sets $W^0(y^1)$, $W^0(y^3)$ and $W^0(y^4)$, and one maximal nondominated face of dimension 1 given by any weight in the interior of the edge $W^0(y^1)\cap W^0(y^2)$ (see Figure \ref{Decomp}).  

\citet{MSOzpKok} introduce {\em dummy points} in order to modify the structure of the nondominated frontier. These points are defined by 
$$m^q := Me_q \hbox{ for $q = 1,\ldots,p$}$$
where $e_q$ is the $q$-th unit vector, and $M$ is a large positive constant. The set of dummy points $Y_M$ is defined by $\bigcup_{q = 1}^p \{m^q\}$. As \citet{MSOzpKok} make the assumption that $z_k(x) > 0$ for all $k \in \{1,\dots,p\}$, dummy points are thus nondominated points of $Y \cup Y_M$. $M$ must be chosen large enough so that nondominated extreme points of $\conv Y$ remain nondominated extreme points of $\conv (Y \cup Y_M)$. Lower bound values for $M$ that guarantee this, under the assumption that objective coefficients and variables are integer, are given in \citep{OZ08}. The following properties are then verified.
\begin{prop}[\citealp{MSOzpKok}]\label{objView}
The introduction of dummy points in the set of nondominated extreme points 
has the following consequences.
	\begin{enumerate}
	\item For $y \in Y_{SN1} \cup Y_M$, we denote by $W^{0M}(y)$ the set of weights $\lambda \in W^0$ for which $y$ is the image of an optimal solution of $(MOIP_\lambda)$, modified by the introduction of dummy points. The weight set decomposition becomes $W^0 = \bigcup_{y \in Y_M\cup Y_{SN1}}W^{0M}(y)$, and $W^{0M}(y)\cap\fr(W^0) = \emptyset$ for all $y \in Y_{SN1}$. 
	\item Every point $y \in Y_{SN1}$ is adjacent to at least $p$ points in $Y_{SN1} \cup Y_M$.
	\item Any pair of dummy points $m^q$, $m^r \in Y_M$ are adjacent for $p\geq 3$.
	\item If $Y_{SN1} \neq \emptyset$ and $p \geq 3$, then every point $y \in Y_{SN1} \cup Y_M$ is adjacent to at least $p$ points in $Y_{SN1} \cup Y_M$.
	\item All maximal nondominated faces of $\conv (Y \cup Y_M)$ are facets. Thus, $(\conv(Y_{SN1} \cup Y_M))_N$ is fully described by a union of facets. 
	\end{enumerate}
\end{prop}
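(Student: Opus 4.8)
My plan is to prove the whole proposition inside the weight space. I would view the closed weight sets $\cl W^{0M}(y)$, $y\in Y_{SN1}\cup Y_M$, as the full-dimensional cells of the polyhedral subdivision of the simplex $\cl W^0$ induced by the concave piecewise-linear map $\lambda\mapsto \min\{\lambda^T y:y\in\conv(Y\cup Y_M)\}$. This subdivision is face-to-face, and by the correspondence of Proposition~\ref{compW0(y)} (together with the dual reading of \citet{heyd:geom:2008}) the $d$-dimensional faces $F$ of $\conv(Y\cup Y_M)$ are in inclusion-reversing bijection with the $(p-1-d)$-dimensional cells $W(F)=\bigcap_{y\in \mathrm{vert}\,F}\cl W^{0M}(y)$, where $F$ is nondominated exactly when $\mathrm{relint}\,W(F)\cap W^0\neq\emptyset$. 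For part (1), every $\lambda\in W^0$ attains its weighted-sum minimum over $\conv(Y\cup Y_M)$ at a vertex, which for $M$ chosen as in \citet{OZ08} lies in $Y_{SN1}\cup Y_M$; this gives $W^0=\bigcup_{y\in Y_M\cup Y_{SN1}}W^{0M}(y)$. For the second claim I note that on any boundary piece of $\cl W^0$ some coordinate $\lambda_k$ vanishes, so $m^k$ attains value $M\lambda_k=0$, whereas every original $y$ has $\lambda^T y>0$ because $y>0$ and $\lambda\geq 0,\ \lambda\neq 0$; hence no original point is optimal on $\fr(W^0)$ and $W^{0M}(y)\cap\fr(W^0)=\emptyset$ for $y\in Y_{SN1}$.

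Part (2) is then a counting argument: $\cl W^{0M}(y)$ is a bounded $(p-1)$-dimensional polytope, hence has at least $p$ facets, and by part (1) none of them lies on $\fr(W^0)$. Since two cells of a face-to-face subdivision meet in a single common face, distinct facets are shared with distinct neighbours $\cl W^{0M}(y')$, so by Definition~\ref{defadj} we obtain at least $p$ points of $Y_{SN1}\cup Y_M$ adjacent to $y$. Part (3) I would prove by directly describing the common set: $m^q$ and $m^r$ are simultaneously optimal exactly when $\lambda_q=\lambda_r\leq\lambda_s$ for all $s$ and $M\lambda_q\leq\lambda^T y$ for all $y\in Y$. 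The single equation $\lambda_q=\lambda_r$ cuts $\cl W^0$ down to dimension $p-2$, and for $p\geq 3$ the remaining $p-2$ coordinates leave enough freedom to keep the defining inequalities strict on a relatively open region obtained by taking $\lambda_q=\lambda_r=t$ small and spreading the mass $1-2t$ over the other coordinates; thus the common facet is genuinely $(p-2)$-dimensional. For $p=2$ no such free coordinate exists, $\lambda_q=\lambda_r=\tfrac12$ forces $M/2>\lambda^Ty$, and the dummies are not adjacent—explaining the hypothesis $p\geq 3$.

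Part (4) combines the two previous parts for the dummy points. Part (3) already supplies each $m^q$ with the $p-1$ neighbours $\{m^r:r\neq q\}$, so it remains to produce one original neighbour. Here I would argue by contradiction using the centroid $c=(\tfrac1p,\dots,\tfrac1p)$: the only boundary facet of $\cl W^{0M}(m^q)$ is contained in $\{\lambda_q=0\}$, because on the relative interior of any other face $\{\lambda_s=0\}$ the point $m^s$ strictly beats $m^q$; so if $m^q$ had no original neighbour its cell would be cut out by the dummy inequalities alone, namely $\cl W^{0M}(m^q)=\{\lambda\in\cl W^0:\lambda_q\leq\lambda_s\ \forall s\}$, which contains $c$. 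But at $c$ the value of $m^q$ is $M/p$ while an original point $\bar y\in Y_{SN1}\neq\emptyset$ has value $\mathbf 1^T\bar y/p<M/p$ for $M$ large, so $\bar y$ beats $m^q$ at $c$ and $c\notin\cl W^{0M}(m^q)$, a contradiction. Hence $m^q$ has at least one original neighbour and therefore at least $p$ neighbours in total.

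Part (5) is the crux and the step I expect to be hardest. I would prove the contrapositive: assume a maximal nondominated face $F$ with $\dim F\leq p-2$, so $W(F)$ has dimension $k\geq 1$ and $\mathrm{relint}\,W(F)\subseteq W^0$. Maximality forces every proper face of the cell $W(F)$—in particular every vertex—to correspond to a face that is not nondominated, i.e.\ its cell lies on $\fr(W^0)$. But at any boundary weight some $\lambda_k=0$, and there only dummy points are optimal (each original has strictly positive value), so every vertex of $W(F)$ is a facet of $\conv(Y\cup Y_M)$ spanned by dummies only; intersecting these shows $F\subseteq\conv Y_M$, a face of the dummy simplex of the form $\conv\{m^k:k\in T\}$. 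Writing its cell as $\{\lambda:\lambda_k=t\ (k\in T),\ t\leq\lambda_s\ (s\notin T),\ Mt\leq\lambda^Ty\ \forall y\in Y\}$, the same centroid argument as in part (4) shows the constraint $Mt\leq\lambda^Ty$ must become active at some $t^*>0$ before the purely dummy constraints would carry the cell to $c$; at that weight, which lies in $W^0$, an original point joins the optimal set and produces a nondominated face strictly containing $F$, contradicting maximality. The main obstacle throughout is exactly this geometric fact, used in parts (4) and (5): the dummy points can be optimal only in an $O(1/M)$ neighbourhood of $\fr(W^0)$, while a sufficiently large $M$ keeps an original point optimal near the centre. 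Making the ``$M$ large enough'' quantifier precise (as in \citet{OZ08}) and checking the face-to-face and inclusion-reversal bookkeeping are where care is required. Finally, since the nondominated frontier is the union of its maximal nondominated faces, all of which are now facets, $(\conv(Y_{SN1}\cup Y_M))_N$ is a union of facets.
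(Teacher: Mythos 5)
This proposition is stated in the paper as a cited result of \citet{MSOzpKok} and is given \emph{no proof} there, so there is nothing to compare line by line; judged on its own, your weight-space argument is essentially correct and complete. It is also a genuinely different route from the original reference, which reasons directly about facets and normals in the objective space: you instead read everything off the regular, face-to-face subdivision of $\cl W^0$ induced by $\lambda\mapsto\min\{\lambda^Ty : y\in\conv(Y\cup Y_M)\}$ and the inclusion-reversing correspondence between faces and weight cells. What this buys is that your proof anticipates, almost verbatim, the machinery the paper itself develops later: your argument for the second claim of part (1) (a boundary weight with $\lambda_k=0$ gives $\lambda^Tm^k=0<\lambda^Ty$) is exactly the paper's Lemma~\ref{W0Boundary}, and your part (5) is in substance Proposition~\ref{PropDumm} specialized to $S=Y_{SN1}$; parts (2)--(4) come almost for free from the facet-counting of the $(p-1)$-dimensional cells, which the paper never needs to spell out.

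Two places deserve a word of care, though neither is fatal. First, since $W^0$ is relatively open, the literal statement $W^{0M}(y)\cap\fr(W^0)=\emptyset$ is vacuous; your argument correctly proves the intended (stronger) claim $\cl(W^{0M}(y))\cap\fr(W^0)=\emptyset$, and your facet-counting in part (2) implicitly relies on this closure version, so you should state it that way. Second, in part (5), when you walk the weight from $\mathrm{relint}\,W(F)$ toward the centroid $c$, the first constraint to become active need not be an original-point constraint $M t\leq\lambda^Ty$ --- it could be $\lambda_q=\lambda_s$ for some $s\notin T$; but in either case the optimal face at that weight, which lies in $W^0$, strictly contains $F$ and is nondominated, so the contradiction with maximality goes through. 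With those clarifications the proof stands.
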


The method by \citet{MSOzpKok} consists in the computation of $\conv(Y_{SN1} \cup Y_M)_N$, the extreme points of which are $Y_{SN1} \cup Y_M$. According to Proposition \ref{objView}(5), all maximal nondominated faces are facets. It is therefore possible to design an algorithm that identifies these facets.

The algorithm starts with the computation of $Y_M$. A set $S \subseteq Y_{SN}$ is iteratively computed until all facets of $\conv(Y_{SN1} \cup Y_M)_N$ are identified. 

At each iteration of the algorithm, a subset of $p$ points $\{r^1,\ldots,r^p\} \subset S \cup Y_M$ (initially $\{m^1,\ldots,m^p\}$) called {\em stage}, is considered with the normal $\lambda \in \mR^p$ of the hyperplane it defines. If $\lambda \in \mR^p_>$, then the solution of (MOIP$_\lambda$) allows to obtain a supported nondominated point $y$. If $\lambda^Ty = \lambda^Tr^1$ then $\conv\{r^1,\ldots,r^p\}$ is either a facet or a part of a facet of $\conv(Y_{SN1} \cup Y_M)_N$. Otherwise, $y$ is added to $S$ and new stages are generated $\{y, r^2,\ldots,r^p\}$, $\{r^1,y,r^3,\ldots,r^p\}$,..., $\{r^1,\ldots,r^{p-1},y\}$ and considered next if not yet visited.

However, despite Proposition \ref{objView}(5), an arbitrary stage $\{r^1,\ldots,r^p\} \subset S \cup Y_M$ does not necessarily have a normal $\lambda \in \mR^p_>$ (note that in this case, $\conv\{r^1,\ldots,r^p\}$ is not part of $\conv(S \cup Y_M)$). In this case, the weighted sum problem (MOIP$_\lambda$) is not solved. Nevertheless, to observe such a stage cannot be a stopping condition for the algorithm, as the enumeration would be incomplete (see Example \ref{diff}). In order to continue the enumeration, the authors use implicitly the fact that $(\conv(S \cup Y_M))_N$ is a union of facets. Consequently, other (unvisited) stages can be defined using points in $S \cup Y_M$. 
\citet{MSOzpKok} have proposed to choose a point $y \in (Y_{M} \cup S)\setminus\{{r^1},\ldots,{r^p}\}$ together with $\lambda' \in \mR^p_>$ such that $\lambda'^Ty \leq \lambda'^Tr^{i}$ for all $i \in \{1,\ldots,p\}$. Such a point $y$ necessarily exists since $W^0_S(y) \neq \emptyset$ for all $y \in (Y_{M} \cup S)\setminus\{{r^1},\ldots,{r^p}\}$. The stages $\{y, {r^2},\ldots,{r^p}\}$, $\{{r^1},y,{r^3},\ldots,{r^p}\}$, $\ldots$, $\{{r^1},\ldots,{r^{p-1}},y\}$ are next generated to continue the execution of the algorithm.
\section{Approximation Algorithms}\label{sec:litapprox}

There is an extensive literature on the approximation (with a guarantee of quality) of the set of nondominated points of convex or non-convex multi-objective problems \citep[see][for a review]{RW05}. 
We are interested here only in methods that generate convex approximations of multi-objective problems.

The quality of the approximation is usually measured according to an indicator that can be very different for different methods. 
 An a priori quality can be defined by fixing a target value for the quality indicator. In particular by fixing the quality indicator to 0, the obtained approximation becomes exact, i.e.  all nondominated extreme points are found. Even if it is not the purpose of these methods, this shows that methods able to compute the set of nondominated extreme points of any MOIP have been proposed before the methods proposed in \citep{3SE}  and \citep{MSOzpKok}. Of course, we can expect some weaknesses in the use of approximation methods used with an exact purpose, due to their initial design for another purpose.  Our aim here is not to give a complete overview of these methods. We will just review properties that will be useful for the (exact) method we propose next, or that can be seen as related to our work.

\subsection{The method by \citet{SKW02}}

\citet{SKW02} have proposed methods for the approximation of the nondominated set of a multi-objective programme.
For convex problems, their method can be interpreted as an extension of the dichotomic search to higher dimensions: starting from an initial approximation given by the lexicographic minima and using the nadir point (or an approximation of the nadir point) as reference point $y^0$, the convex hull of all these points is computed. In each iteration of the procedure, a cone spanned by $y^0$ and a facet of this convex hull 
is selected for further refinement based on a problem specific error measure.
In this cone, the normal of the defining facet is used to define weights $\lambda$ for a 
subproblem (MOIP$_\lambda$). The solution of this subproblem (MOIP$_\lambda$) leads to a new point that is included in the convex hull for the next iteration.
The procedure stops as soon as the approximation error falls below a prespecified threshold.
In this way, a polyhedral approximation of the set $\conv Y \cap (y^0 - \mR^p_\geqq)$ is generated.

Note that if this method is applied in order to generate all nondominated extreme points of (MOIP),
the reference point has to be selected to satisfy $y^0 \geqq y^N$ to ensure that $Y_{SN} \subseteq Y_N \subseteq \conv Y \cap (y^0 - \mR^p_\geqq)$.

The difficulties indicated in Section \ref{subsec:difficulties} are also noted in \citet{SKW02}.
Nevertheless, weights $\lambda \not \in \mR^p_>$ are used in this procedure to generate points that are not necessarily nondominated, but useful for the update of the convex hull. This method is thus not stopped because of the presence of weights with negative components. Applying this idea in the case of Example \ref{diff}, we could keep the point $y^d = (16,20,16)^T$ obtained by the solution of (MOIP$_{(1,-40,-28)}$) and compute  $\conv\{y^1,y^2,y^3,y^d\}$ in order to obtain new facets with normals that can be used to define new weights for later iterations. Finally, the whole set $\conv Y$ or a subset of it, can be computed in this way and $(\conv Y)_N$ can be deduced by a filtering step.

\subsection{The method by \citet{RenDamHer11}}

\citet{RenDamHer11} develop another method to approximate the nondominated set of
multi-objective programmes with convex objective functions and feasible sets.
Their method is also based on the computation 
of the convex hull of a growing set of points $S$.
Interestingly enough, they add dummy points to the set of points $S$
that constitutes the current approximation of the nondominated set.
The aim of this is to partially cope with the difficulties related to 
dominated facets.
Namely, for each extreme point $y$ of $S$, $p$ dummy points $d^1(y), \dots, d^p(y)$ 
are defined in the following way:
$$d^i_k (y) = \begin{cases}
                p y_k^{ub} + \theta & \hbox{if $k = i$}\\
                y_k & \hbox{otherwise}
              \end{cases}$$
for all $i, k \in \{1, \dots, p\}$, where $y_k^{ub}$ 
is an upper bound on the $k$th component value of any nondominated point and $\theta$ 
is a positive constant.

It is then shown \citep[Lemma 2]{RenDamHer11} 
that all \emph{relevant} facets, i.e. facets that have at least one non-dummy point as extreme point,
are weakly nondominated in the sense that any normal vector to any such facet pointing to the interior of $\conv S$ is in $\mR^p_{\geq}$.

Compared to \citet{MSOzpKok}, 
this approach defines a large number of dummy points (even if some are redundant and therefore filtered)
that increase the numbers of extreme points and facets of $\conv S$.

\section{A new Exact Method}\label{newMethod}

In this section, we extend some properties proposed in \citep{3SE}, \citep{MSOzpKok} and \citep{SKW02}, and we propose next a new method.

\subsection{Further analysis}\label{furana}

We suppose that we know a subset $S$ of supported nondominated points, and we assume that the non-extreme points are filtered from $S$ as soon as possible. If $|S| > p$ then $\conv S$ is a full-dimensional polytope. The facets of this polytope can be computed and updated (when points are added to $S$) using 
a convex hull algorithm. We consider the sets $W^0(y^i)$ and $W^0_S(y^i)$ for all $y^i \in S$ as defined in \citep{3SE}. 
In the following, it will be important to notice that $W^0$ is an open polytope, and for any supported point $y^i$, $W^0(y^i)$ and $W^0_S(y^i)$ may be closed polytopes, or polytopes that are neither closed nor open. These polytopes may have open vertices and open faces. These open faces and open vertices are located on the boundary $\fr(W^0)$ of $W^0$. 

Knowing a weight set decomposition, Proposition~\ref{faces} below gives a characterization of the nondominated facets of the convex hull of a subset of supported points. This result has been stated with a different formulation and a different proof in \citep{3SE} and \citep{TheseAnthony}.

\begin{prop}\label{faces}
Let $S$ be a set of supported nondominated points, then there is a one-to-one correspondence 
between weights given by extreme points of $W^0_S(y)$ 
that are located in $W^0$, 
for $y \in S$,
and weights associated to facets of $(\conv S)_N$.
\end{prop}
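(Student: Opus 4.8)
The plan is to interpret the weight sets $W^0_S(y)$ as the restriction to the simplex $W^0$ of the normal fan of the polytope $\conv S$, and to read off the claimed correspondence from the standard face/normal-cone duality. Since we assume throughout this section that non-extreme points have been filtered from $S$ and that $\conv S$ is full-dimensional, every $y \in S$ is a vertex of $\conv S$, and by linearity the inequalities defining $W^0_S(y)$ over $S$ are equivalent to $\lambda^T y \leqq \lambda^T z$ for all $z \in \conv S$; hence $W^0_S(y) = N(y) \cap W^0$, where $N(y)$ is the normal cone of $\conv S$ at $y$. Under this identification a facet of $\conv S$ (a $(p-1)$-dimensional face) corresponds to a ray of the normal fan, and such a ray meets the $(p-1)$-dimensional simplex $W^0$ in at most a single point, which is exactly a vertex of the subdivision $\{W^0_S(y)\}_{y \in S}$, i.e.\ an extreme point of some $W^0_S(y)$. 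The role of the restriction ``located in $W^0$'' is to single out the \emph{nondominated} facets: a normal ray meets the \emph{open} simplex $W^0$ precisely when its direction $\lambda$ is strictly positive.

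Concretely I would argue both directions via active constraints. First, the map from facets to weights. A facet $F$ of $(\conv S)_N$ is nondominated and $(p-1)$-dimensional, so it has a unique inner normal direction, which is strictly positive and may be normalized to a weight $\lambda_F \in W^0$. This map is injective because distinct facets of a polytope have distinct normal directions (the minimizing face $F_{\lambda} := \{z \in \conv S : \lambda^T z = \min_{z' \in \conv S} \lambda^T z'\}$ is uniquely determined by $\lambda$). To see that $\lambda_F$ is an extreme point of $W^0_S(y)$ for any vertex $y$ of $F$: the facet $F$ contains $p$ affinely independent vertices $y = y_0, y_1, \dots, y_{p-1} \in S$, giving the active constraints $\lambda_F^T(y_i - y) = 0$ with $\{y_i - y\}_{i=1}^{p-1}$ linearly independent. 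A vertex of the $(p-1)$-dimensional polyhedron $W^0_S(y)$ requires these $p-1$ constraint gradients, together with the gradient $\mathbf{1}$ of the normalization $\sum_k \lambda_k = 1$, to span $\mR^p$. This holds automatically here: if $\mathbf{1} = \sum_i c_i(y_i - y)$, then applying $\lambda_F^T$ gives $1 = \lambda_F^T \mathbf{1} = \sum_i c_i\, \lambda_F^T(y_i - y) = 0$, a contradiction, so $\{\mathbf{1}, y_1 - y, \dots, y_{p-1} - y\}$ are linearly independent and $\lambda_F$ is pinned down as a vertex.

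For the converse, let $\lambda^*$ be an extreme point of $W^0_S(y)$ that lies in $W^0$. Because $\lambda^*$ is strictly positive, none of the boundary constraints $\lambda_k \geqq 0$ is active, so the $p-1$ active constraints realizing the vertex are all of the form $\lambda^{*T}(y'_i - y) = 0$ for points $y'_i \in S$, and $\{\mathbf{1}, y'_1 - y, \dots, y'_{p-1} - y\}$ span $\mR^p$. In particular $\{y'_i - y\}_{i=1}^{p-1}$ are linearly independent, so $y, y'_1, \dots, y'_{p-1}$ are $p$ affinely independent points of $S$, all minimizing $\lambda^{*T} z$ over $S$ and hence over $\conv S$. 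Therefore the face $F_{\lambda^*}$ has dimension at least $p-1$; it is a proper face (full-dimensionality of $\conv S$ prevents the linear form $\lambda^*$ from being constant on it), hence a facet, and it is nondominated since $\lambda^* > 0$. Thus $\lambda^* = \lambda_{F_{\lambda^*}}$, and the two maps are mutually inverse; a given weight may be an extreme point of $W^0_S(y)$ for several $y$, namely the vertices of its associated facet, but it corresponds to that one facet, which yields the bijection between the set of such weights and the set of facets of $(\conv S)_N$.

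The step I expect to require the most care is the equivalence between ``extreme point of $W^0_S(y)$ located in $W^0$'' and ``affine independence of the $p$ supporting points,'' because of the bookkeeping forced by the normalization constraint $\sum_k \lambda_k = 1$: a priori, affine independence of the points and the vertex property in the $(p-1)$-dimensional weight space are not the same condition. The observation that unlocks this is that for $\lambda \in W^0$ one has $\lambda^T \mathbf{1} = 1 \neq 0$, so the normalization gradient $\mathbf{1}$ is automatically independent of the difference vectors $y_i - y$ (which are orthogonal to $\lambda$); and the hypothesis that the extreme point lies in the open simplex $W^0$ rather than on $\fr(W^0)$ is exactly what excludes active nonnegativity constraints, thereby forcing all active constraints to come from coincidences of optimal points, i.e.\ from the geometry of $\conv S$.
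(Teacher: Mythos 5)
Your proof is correct and follows essentially the same route as the paper's: both directions are argued via the active constraints at an extreme point of $W^0_S(y)$, identifying the points of $S$ that attain equality with the vertices of a supporting facet of $\conv S$, and using strict positivity of $\lambda\in W^0$ to conclude nondominance. Your treatment is somewhat more explicit than the paper's on the linear-independence bookkeeping (the role of the normalization gradient $\mathbf{1}$ and the affine independence of the $p$ supporting points), but this is a refinement of the same argument rather than a different approach.
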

\begin{proof}
For all $y \in S$, $W^0_S(y)$ can be described by a minimal set of (non-redundant) constraints 
$\lambda^Ty \leq \lambda^Ty^i$ for all $y^i \in I$ 
where $I \subseteq S \setminus \{y\}$ (plus possibly some of the constraints $\lambda_k > 0$, $k = 1,\ldots,p$ and $\lambda_p = 1 - \sum_{k = 1}^{p-1} \lambda_k$). 
Given an extreme point $\hat\lambda$ of $W^0_S(y)$ located in $W^0$,
$\hat\lambda$ {satisfies} at least $p - 1$ of these constraints with equality 
as $W^0_S(y)$ is a polytope of dimension $p-1$. 
Therefore, {there is a subset $K\subset I$ with $|K| \geq p-1$ such that} $\hat\lambda^T y = \hat\lambda^T y^i$ for all $y^i \in K$. 
As $\hat\lambda \in W^0_S(y)$, for all $\hat y \in S \setminus \{y\}$ 
we have $\hat\lambda^T y \leq \hat\lambda^T\hat y$, 
and thus for all $y^i \in K$, $\hat\lambda^Ty^i \leq \hat\lambda^T\hat y$. 
In other words, $\hat\lambda \in W^0_S(y) \cap \bigcap_{y^i \in K} W^0_S(y^i)$ 
and $F =  \conv (K\cup\{y\})$ is a part of a facet of $\conv S$.
Since $\hat\lambda$ is a normal to $F$ pointing to the interior of $\conv S$
and  $\hat \lambda \in W^0 \subset \mR^p_>$, $F$ is a part of a nondominated facet of $\conv S$. 

Conversely, given a nondominated facet $F$ of $\conv S$, 
it is immediate that its associated weight belongs to $\mR^p_>$ 
and that there is thus a positive multiple in $W^0$.
Moreover, this weight is necessarily located 
at the intersection of the sets $W^0(y^i)$ 
where the $y^i$'s are the extreme points of $F$.
\end{proof}\medskip

The weights associated to other (i.e. dominated) facets of $\conv S$ cannot be defined using a decomposition of $W^0$ as such  weights do not belong to $\mR^p_>$. However, weights with negative components are considered in \citep{SKW02}, in order to avoid a premature termination of the dichotomic scheme, i.e., to find appropriate weights $\lambda \in \mR^p_>$ in later iterations. In the following, we analyze under what conditions negative components in a weight associated to $p$ points in $S$ occur.

\begin{ex}\label{exExt}
We consider the instance of the Assignment Problem given in Example~\ref{diff}. Suppose we start again by computing the set of solutions that minimize each objective. Computing $\conv S$ (not full-dimensional as $|S| = 3$), we get a single facet with a corresponding weight $\lambda = (-1,40,28)^T$ that is not suitable to continue the classical dichotomic algorithm. We analyze the cause for this using the weight space.

We consider the computation of the sets $W^0_S(y^i)$ 
and we obtain the decomposition of $W^0$ given by Figure~\ref{decompWp}.
We can note that the extreme points of the polytopes $\cl(W^0_S(y^i))$ 
are not located in $W^0$ and that consequently 
there is no facet in $(\conv S)_N$. 
In other words, all facets of $\conv S$ are dominated. 
To find the weight associated to such a facet of $\conv S$,
we must extend the facets (here edges) of the sets $W^0_S(y^i)$ outside of $W^0$. 
We obtain the weight $\lambda = (-\frac{1}{67},\frac{40}{67},\frac{28}{67})$ 
as an intersection of the extensions of $W^0_S(y^1)$, $W^0_S(y^2)$ and $W^0_S(y^3)$ 
(see Figure~\ref{decompExt}). 
\begin{figure}[htb] 
\begin{center}
  \begin{minipage}[t]{0.45\textwidth}
  \includegraphics[width=0.96\textwidth]{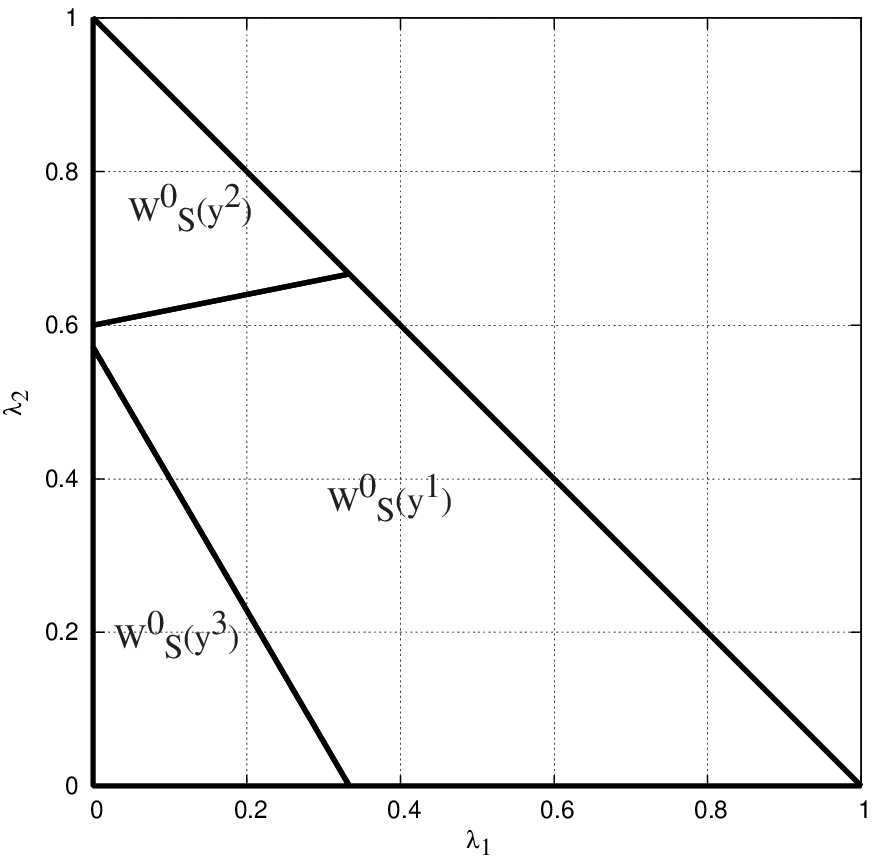}
  \caption{Decomposition of $W^0$ with $W^0_S(y^1)$, $W^0_S(y^2)$ and $W^0_S(y^3)$.}
  \label{decompWp} 
  \end{minipage}
  \hfill
  \begin{minipage}[t]{0.45\textwidth}
  \includegraphics[width=\textwidth]{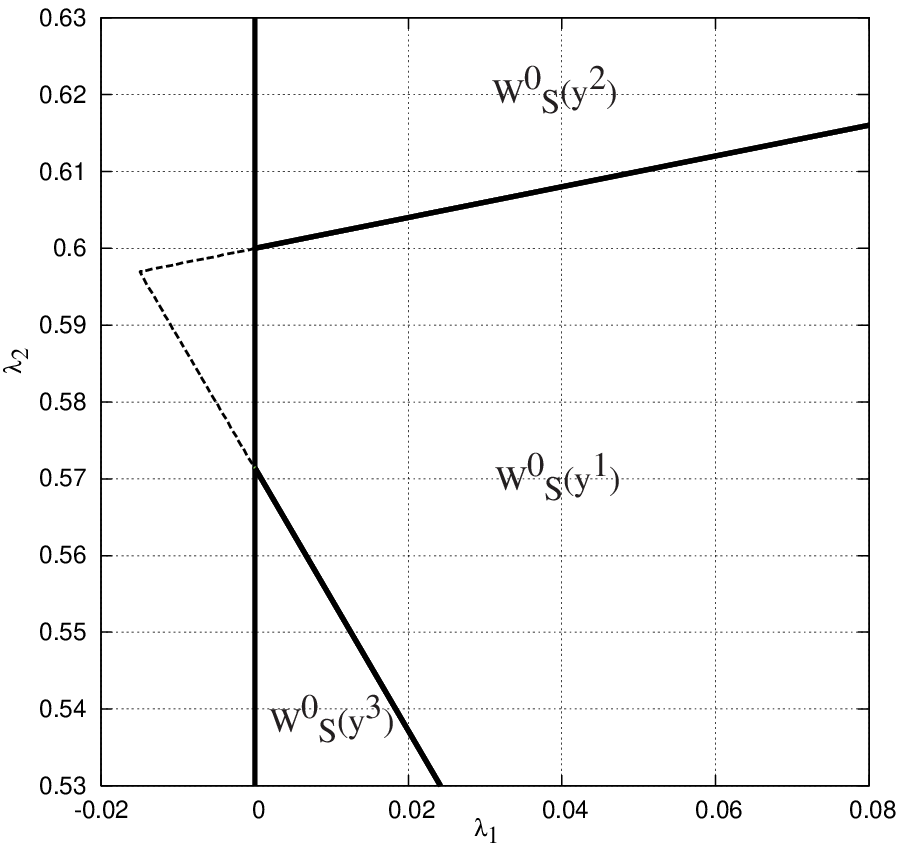}
  \caption{Extension of the decomposition outside of $W^0$.} 
  \label{decompExt}
  \end{minipage}
\end{center}
\end{figure}
\end{ex}

In order to analyse the observation of Example~\ref{exExt}, we relax the constraints of strict positivity of the components of weights in the definition of $W^0$ and we obtain the extended weight set  
$$W^0_{\ext} := \{\lambda \in \mR^p \hbox{ with $\sum_{k = 1}^p \lambda_k = 1$}\}.$$
It should be noted that if $W^0_{\ext}$ includes weights with negative components, it does not include weights $\lambda$ such that $\sum_{k = 1}^p \lambda_k = 0$ or  $\sum_{k = 1}^p \lambda_k = -1$. We obviously have $W^0 \subset W^0_{\ext}$. We {accordingly} extend the definition of the sets $W^0_S(y)$ to $W^0_{S_{\ext}}(y)$ for all $y \in S$ by defining
\begin{align*}W^0_{\ext}(y) &:=  \{\lambda  \in W^0_{\ext} :  \lambda^Ty \leq \lambda^Ty'  \mbox{ for all }y' \in Y_{SN1}\setminus\{y\}\}\\
W^0_{S_{\ext}}(y) &:=  \{\lambda  \in W^0_{\ext} :  \lambda^Ty \leq \lambda^Ty'  \mbox{ for all }y' \in S\setminus\{y\}\}.
\end{align*}
Proposition~\ref{W0ext} provides the main properties of these new sets.

\begin{prop}\label{W0ext}
Let $y \in S$ be a supported nondominated point.
\begin{enumerate}
\item $W^0_{\ext}$ is a hyperplane in $\mR^p$.
\item For all $y \in S$, $W^0_{S_{\ext}}(y)$ is a closed polyhedron.
\item If $\cl(W^0_S(y))$ has no extreme point in $\fr(W^0)$, then $W^0_S(y) = W^0_{S_{\ext}}(y)$. 
\item If $\cl(W^0_S(y))$ has {an extreme point} in $\fr(W^0)$, then $W^0_S(y) \subset W^0_{S_{\ext}}(y)$.
All extreme points of $W^0_S(y)$ that are located in $W^0$ are also extreme points of $W^0_{S_{\ext}}(y)$. The other extreme points of $W^0_{S_{\ext}}(y)$ (if any) are located either in $\fr(W^0)$ or outside of $\cl(W^0)$.
\item Extreme points of $W^0_{S_{\ext}}(y)$ correspond to weights associated to some facets of $\conv S$.
\end{enumerate}
\end{prop}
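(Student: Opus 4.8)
The plan is to treat the five items by exploiting that all three objects $W^0_S(y)$, $\cl(W^0_S(y))$ and $W^0_{S_{\ext}}(y)$ live in the $(p-1)$-dimensional hyperplane $H:=\{\lambda:\sum_{k=1}^p\lambda_k=1\}$ and differ only in how they treat the sign constraints $\lambda_k>0$. Item~(1) is immediate since $W^0_{\ext}=H$ is cut out by the single affine equation $\sum_k\lambda_k=1$. For item~(2) I would simply observe that $W^0_{S_{\ext}}(y)$ is the intersection of $H$ with the finitely many closed half-spaces $\{\lambda:\lambda^T(y-y')\leqq 0\}$, $y'\in S\setminus\{y\}$; being a finite intersection of closed sets defined by non-strict linear inequalities it is a closed polyhedron (possibly unbounded, whence ``polyhedron'' rather than ``polytope''). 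The pivot for everything that follows is the identity $\cl(W^0_S(y))=W^0_{S_{\ext}}(y)\cap\{\lambda\geqq 0\}$: re-adding the sign constraints $\lambda_k\geqq 0$ to $P:=W^0_{S_{\ext}}(y)$ produces exactly the closure of $W^0_S(y)$, which is moreover \emph{bounded} because it sits inside the standard simplex $\cl(W^0)$.

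For item~(3) assume every vertex of the polytope $Q:=\cl(W^0_S(y))$ lies in $W^0$, i.e.\ is strictly positive. Since $Q$ is bounded it equals the convex hull of its vertices, and a convex combination of strictly positive points is strictly positive; hence $Q\subseteq W^0$, so $W^0_S(y)=Q\cap W^0=Q$ is already closed. It then remains to upgrade $Q=P\cap\{\lambda\geqq 0\}$ to $Q=P$. I would argue by contradiction: if some $\lambda^\ast\in P$ had a negative coordinate, then the segment inside the convex set $P$ joining $\lambda^\ast$ to any strictly positive point of $Q$ would meet $\fr(W^0)$ at the first-crossing point $\nu\in P$ with $\nu\geqq 0$ and $\nu_k=0$ for some $k$; but then $\nu\in P\cap\{\lambda\geqq 0\}=Q$ while $\nu\notin W^0$, contradicting $Q\subseteq W^0$. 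Hence $P=Q=W^0_S(y)$, proving item~(3).

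Items~(4) and~(5) then follow by bookkeeping on active constraints, using that within $H$ a vertex is characterised by $p-1$ linearly independent active constraints. The strict inclusion in~(4) is witnessed by the hypothesised vertex $v$ of $Q$ on $\fr(W^0)$: it satisfies $v\in\cl(W^0_S(y))\subseteq P$ but $v\notin W^0$, so $v\in W^0_{S_{\ext}}(y)\setminus W^0_S(y)$. Since $P$ carries only the ``objective'' constraints $\lambda^T(y-y')\leqq 0$ while $Q$ carries the sign constraints in addition, a vertex of $Q$ lying in $W^0$ has all sign constraints slack, so its $p-1$ active constraints are objective constraints of $P$ and it is a vertex of $P$; the symmetric count shows every vertex of $P$ lying in $W^0$ is a vertex of $Q$, whence the remaining vertices of $P$ can only sit in $\fr(W^0)$ or outside $\cl(W^0)$. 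Finally, item~(5) is proved exactly as Proposition~\ref{faces}, except that the positivity of the normal is dropped: a vertex $\hat\lambda$ of $W^0_{S_{\ext}}(y)$ satisfies $\hat\lambda^Ty=\hat\lambda^Ty^i$ for the points $y^i$ of some $K$ with $|K|\geq p-1$ and $\hat\lambda^Ty\leqq\hat\lambda^Ty'$ for all $y'\in S$, so $\conv(K\cup\{y\})$ is a facet of $\conv S$ with inward normal $\hat\lambda$; as $\hat\lambda$ need not lie in $\mR^p_>$, this facet may be dominated, which is why only \emph{some} facets of $\conv S$ arise (precisely those whose inward normal has nonzero coordinate sum, so that it can be normalised into $W^0_{\ext}$).

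I expect the main obstacle to be item~(3). The difficulty is conceptual rather than computational: $W^0_S(y)$ is in general neither open nor closed, so the claim is genuinely about the interplay of the three nested descriptions, and the crossing argument relies on the boundedness of $Q$ (inherited from $Q\subseteq\cl(W^0)$) together with the convexity of $P$. By contrast, the vertex correspondences in~(4) need only the standard active-set characterisation of vertices in $H$, and~(5) is a direct transcription of the proof of Proposition~\ref{faces}.
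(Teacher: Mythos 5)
Your proof is correct and follows essentially the same route as the paper's: item (1) and (2) are the same direct observations, items (4) and (5) reuse the active-constraint/vertex correspondence and the proof of Proposition~\ref{faces} exactly as the paper does, and your segment-crossing argument for item (3) is just a more explicit justification of the paper's one-line claim that the positivity constraints are redundant when $\cl(W^0_S(y))$ avoids $\fr(W^0)$. The only thing worth adding is a one-line check that the pivot identity $\cl(W^0_S(y)) = W^0_{S_{\ext}}(y)\cap\{\lambda\geqq 0\}$ holds, which follows from $W^0_S(y)\neq\emptyset$ by moving slightly from any boundary point towards a strictly positive point of $W^0_S(y)$.
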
 

\begin{proof}
\begin{enumerate}
\item $\sum_{k = 1}^p \lambda_k = 1$ is the equation of a hyperplane in $\mR^p$.

\item For any $y \in S$, $W^0_{S_{\ext}}(y)$ is defined by a finite set of linear constraints, and it is therefore a closed polyhedron. {Note that in general this does not imply that $W^0_{S_{\ext}}(y)$ is also bounded.}

\item $W^0_S(y)$ and $W^0_{S_{\ext}}(y)$ are defined using the same linear constraints, the only difference is that $\lambda_k > 0$ for all $k \in \{1,\ldots,p\}$ in the definition of $W^0_S(y)$. If $\cl(W^0_S(y))$ has no point in $\fr(W^0)$ then the constraints $\lambda_k > 0$ for all $k \in \{1,\ldots,p\}$ are redundant in its definition. We have therefore $W^0_S(y) = W^0_{S_{\ext}}(y)$.

\item 
If $\cl(W^0_S(y))$ has {an extreme point} in $\fr(W^0)$ then $W^0_S(y) \neq \cl(W^0_S(y))$. Moreover, we {immediately have} $\cl(W^0_S(y)) \subseteq  W^0_{S_{\ext}}(y)$ by definition of both sets. Consequently, $W^0_S(y) \subset W^0_{S_{\ext}}(y)$.
For all extreme points of $\cl(W^0_S(y))$ that do not belong to $\fr(W^0)$, 
none of the constraints $\lambda_k > 0$ for $k \in \{1,\ldots,p\}$ are active. Consequently, these extreme points are common to $W^0_S(y)$ and $W^0_{S_{\ext}}(y)$. Finally as $W^0_S(y) \subset W^0_{S_{\ext}}(y)$, extreme points located in $\fr(W^0)$ or outside of $\cl(W^0)$ and/or extreme rays may be necessary to complete the definition of $W^0_{S_{\ext}}(y)$.
\item The proof is identical to the first part of the proof of Proposition \ref{faces}, except that the weights defined by extreme points of $W^0_{S_{\ext}}(y)$ are not necessarily located in $W^0$.
\end{enumerate}
\end{proof}\medskip

Proposition~\ref{faces2} can be seen as an extension of Proposition~\ref{faces} providing a link between $(\conv S)_N$, $(\conv S)_{wN}$ and the sets $W^0_{S_{\ext}}(y)$ for $y \in S$.

\begin{prop}\label{faces2}
Let $S$ be a set of supported nondominated points.
\begin{enumerate}
\item There is a one-to-one correspondence between weights given by extreme points of $W^0_{S_{\ext}}(y)$ that are located in $W^0$, for $y \in S$, and weights associated to facets of $(\conv S)_N$. 
\item There is a one-to-one correspondence between weights given by extreme points of $W^0_{S_{\ext}}(y)$ that are located in $\cl(W^0)$, for $y \in S$, and weights associated to weakly nondominated facets of $\conv S$. 
\end{enumerate}
\end{prop}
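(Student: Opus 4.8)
The plan is to piggy-back on the already-proved Proposition~\ref{faces} for part~1 and to re-run its geometric argument with weak inequalities for part~2. The running observation I would exploit is that $W^0_S(y)$ and $W^0_{S_{\ext}}(y)$ are cut out by the \emph{same} family of inequalities $\lambda^T y \leq \lambda^T y'$, $y' \in S\setminus\{y\}$, the only difference being that $W^0_S(y)$ additionally imposes $\lambda_k > 0$ for all $k$; equivalently $W^0_S(y) = W^0_{S_{\ext}}(y)\cap W^0$.

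For part~1, first I would show that the extreme points of $W^0_{S_{\ext}}(y)$ lying in $W^0$ are exactly the extreme points of $W^0_S(y)$ lying in $W^0$. Indeed, at a point $\hat\lambda\in W^0$ the positivity constraints are inactive, so in a neighbourhood of $\hat\lambda$ the two sets coincide and $\hat\lambda$ is a vertex of one if and only if it is a vertex of the other; the inclusion ``vertices of $W^0_S(y)$ in $W^0$ are vertices of $W^0_{S_{\ext}}(y)$'' is already recorded in Proposition~\ref{W0ext}(4). Once the two vertex sets are identified, the desired bijection with the facets of $(\conv S)_N$ is precisely the content of Proposition~\ref{faces}, so part~1 follows with no further work.

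For part~2 I would repeat, almost verbatim, the two directions of the proof of Proposition~\ref{faces}, replacing $W^0$ by $\cl(W^0)$ and $\mR^p_>$ by $\mR^p_\geqq$. In the forward direction, an extreme point $\hat\lambda$ of the $(p-1)$-dimensional polyhedron $W^0_{S_{\ext}}(y)$ that lies in $\cl(W^0)$ activates at least $p-1$ of the defining inequalities with equality, say $\hat\lambda^T y=\hat\lambda^T y^i$ for all $y^i$ in some $K$ with $|K|\geq p-1$; then $F=\conv(K\cup\{y\})$ is part of a facet of $\conv S$ whose inward normal is $\hat\lambda$, and since $\hat\lambda\in\cl(W^0)\subset\mR^p_\geqq$ the facet characterisation in the definitions shows $F$ is weakly nondominated. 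In the converse direction, a weakly nondominated facet $F$ has an inward normal in $\mR^p_\geqq\setminus\{0\}$; its $\ell_1$-normalisation is the unique representative in $\cl(W^0)$ and, exactly as in Proposition~\ref{faces}, sits at the intersection of the sets $W^0_{S_{\ext}}(y^i)$ over the vertices $y^i$ of $F$, where it is an extreme point. Since a facet normal is unique up to positive scaling, normalisation turns the assignment facet $\leftrightarrow$ weight into a bijection.

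The hard part will be the boundary bookkeeping in part~2: checking that the extreme points on $\fr(W^0)$ (those with some $\lambda_k=0$) are matched with exactly the weakly-but-not-strictly nondominated facets, and that no dominated facet sneaks in. Here the crucial point is that normalisation by $\|\cdot\|_1$ is legitimate precisely for weakly nondominated facets: their normals are nonnegative and nonzero, so $\sum_k\lambda_k>0$ and the normalised weight lands in $\cl(W^0)\subset W^0_{\ext}$. A dominated facet, by contrast, has a normal with a strictly negative component and possibly $\sum_k\lambda_k\leq 0$, which either violates $\lambda\in\mR^p_\geqq$ or cannot be normalised into $W^0_{\ext}$ (recall that $W^0_{\ext}$ excludes the hyperplanes $\sum_k\lambda_k\in\{0,-1\}$); either way such facets are correctly excluded. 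As a consistency check, part~1 is then recovered as the special case of part~2 in which all active vertices carry a normal with $\lambda_k>0$ for every $k$.
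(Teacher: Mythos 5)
Your proposal is correct and follows essentially the same route as the paper: part~1 is reduced to Proposition~\ref{faces} after identifying the vertices of $W^0_{S_{\ext}}(y)$ inside $W^0$ with those of $W^0_S(y)$, and part~2 reruns the two directions of that argument with $\cl(W^0)$ and $\mR^p_\geqq$ in place of $W^0$ and $\mR^p_>$ (the paper simply invokes Proposition~\ref{W0ext}(5) for the forward direction, which is the same rank-of-active-constraints argument you spell out). No substantive difference.
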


\begin{proof}
Proposition~\ref{W0ext}~$(3)$ and $(4)$ implies that for a given $y \in S$, either $W^0_{S_{\ext}}(y) = W^0_S(y)$ or $W^0_S(y) \subset W^0_{S_{\ext}}(y)$. In particular, all extreme points of $W^0_S(y)$ that are located in $W^0$ are also extreme points of $W^0_{S_{\ext}}(y)$. 
\begin{enumerate}
\item The statement is next a direct consequence of Proposition~\ref{faces}.
\item Given an extreme point $\lambda$ of $W^0_{S_{\ext}}(y)$ located in $\fr(W^0)$, Proposition~\ref{W0ext}(5) implies that it corresponds to the weight associated to a facet. Moreover, as $\lambda \in \fr(W^0)$, this facet is weakly nondominated. Conversely, given a weakly nondominated facet $F$ of $\conv S$, its associated weight belongs to $\mR^p_\geq$, and there is a positive multiple in $\fr(W^0)$. Moreover, this weight is necessarily located in the intersection of the sets $W^0_{S_{\ext}}(y^i)$ where the $y^i$'s are the extreme points of $F$.
\end{enumerate}
\end{proof}\medskip

We cannot expect a one-to-one correspondence between the facets of $\conv S$ and extreme points of the sets $W^0_{S_{\ext}}(y)$ for $y \in S$. Indeed, dominated facets may also have a normal $\lambda$ such that $\sum_{k = 1}^p \lambda_k = 0$ or  $\sum_{k = 1}^p \lambda_k = -1$, and such weights are not considered in the extended weight set $W^0_{S_{\ext}}$. 

Finally, our interest is not generally in the dominated facets of $\conv S$, but in facets for which at least one extreme point is nondominated. Proposition~\ref{MainResult} will be the main result to propose new extensions of the dichotomic scheme to the multi-objective case.

\begin{prop}\label{MainResult}
Let $y \in S$, $W^0_S(y)$ and $W^0_{S_{\ext}}(y)$ be its associated weight sets. 
\begin{enumerate}
\item If $W^0_{S_{\ext}}(y) \subset W^0$, then every facet of $\conv S$ such that $y$ is an extreme point of this facet is nondominated. 
\item If $W^0_{S_{\ext}}(y) \subset \cl(W^0)$, then every facet of $\conv S$ such that $y$ is an extreme point of this facet is at least weakly nondominated. 
\end{enumerate}
\end{prop}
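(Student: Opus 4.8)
The plan is to read both statements off the geometry of the inward normal cone of $\conv S$ at the vertex $y$, namely
\[
N(y) := \{\lambda \in \mR^p : \lambda^Ty \leq \lambda^Ty' \text{ for all } y' \in S\},
\]
and to exploit that $W^0_{S_{\ext}}(y) = N(y) \cap W^0_{\ext}$ is exactly the slice of this cone by the hyperplane $W^0_{\ext} = \{\lambda : \sum_{k=1}^p \lambda_k = 1\}$. Since the non-extreme points have been filtered, $y$ is a vertex of $\conv S$, so the facets of $\conv S$ incident to $y$ are in one-to-one correspondence with the extreme rays of $N(y)$, each such facet $F$ having as its inward normal a generator $d_F$ of the corresponding ray. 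The whole argument then reduces to controlling the sign pattern of these normals.

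First I would record that $W^0_{S_{\ext}}(y)$ is nonempty and bounded. Nonemptiness holds because $y$ is a supported nondominated point, so some $\lambda \in \mR^p_>$ makes $y$ optimal over $Y \supseteq S$; after normalizing $\sum_k \lambda_k = 1$ this gives $\lambda \in W^0_S(y) \subseteq W^0_{S_{\ext}}(y)$. Boundedness follows from the hypothesis: under either assumption $W^0_{S_{\ext}}(y) \subseteq \cl W^0 = \{\lambda \in \mR^p_\geqq : \|\lambda\|_1 = 1\}$, which is the bounded standard simplex. By Proposition~\ref{W0ext}(2) the set $W^0_{S_{\ext}}(y)$ is a closed polyhedron, hence it is in fact a polytope and has no extreme rays.

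The key step — and the place where the hypothesis really bites — is to deduce from boundedness that every extreme ray direction $d$ of $N(y)$ satisfies $\sum_{k=1}^p d_k > 0$. I would argue by contradiction: if some extreme ray had $\sum_k d_k \leq 0$, then, picking any $\mu \in W^0_{S_{\ext}}(y)$, the half-line $\{\mu + t\,d : t \geq 0\}$ lies in the cone $N(y)$; renormalizing each point back onto $W^0_{\ext}$ yields an unbounded family inside $W^0_{S_{\ext}}(y)$ (if $\sum_k d_k = 0$ the half-line already lies in $W^0_{\ext}$; if $\sum_k d_k < 0$ one rescales $\mu + t\,d$ by its coordinate-sum and lets that sum tend to $0^+$), contradicting boundedness. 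Consequently every facet $F$ incident to $y$ has an inward normal $d_F$ with positive coordinate-sum, so $\lambda_F := d_F / \sum_k (d_F)_k$ is a genuine \emph{extreme point} of $W^0_{S_{\ext}}(y)$, rather than an escaping ray or a normal excluded from $W^0_{\ext}$ (those summing to $0$ or $-1$). I expect this to be the main obstacle, since it is precisely what prevents a facet through $y$ from slipping past the slice $W^0_{\ext}$ undetected.

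It then remains to conclude, and here the two parts are identical in structure. For part (1), the hypothesis $W^0_{S_{\ext}}(y) \subset W^0$ places each $\lambda_F$ in $W^0 \subset \mR^p_>$, so the inward normal of every facet incident to $y$ has strictly positive components and the facet is nondominated; equivalently one may invoke Proposition~\ref{faces2}(1). For part (2), the hypothesis $W^0_{S_{\ext}}(y) \subset \cl W^0$ gives $\lambda_F \in \cl W^0 \subset \mR^p_\geqq$, so the inward normal has nonnegative components and the facet is at least weakly nondominated, which is the content of Proposition~\ref{faces2}(2). The two cases differ only in whether the bounding simplex is taken open or closed.
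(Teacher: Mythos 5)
Your argument is correct in substance and is, at its core, the same argument as the paper's, repackaged in the language of the inward normal cone $N(y)$: where you say ``an extreme ray $d$ of $N(y)$ with $\sum_k d_k \leq 0$, combined with a point $\mu$ of the slice, yields after renormalization an unbounded family in $W^0_{S_{\ext}}(y)$,'' the paper takes a normal $\lambda^-$ to a dominated facet with $\sum_k \lambda^-_k \in \{0,-1\}$, mixes it with a weight $\lambda^+ \in W^0_S(y)$, and renormalizes to exhibit an explicit element of $W^0_{S_{\ext}}(y)\setminus W^0$; the contradiction is with the sign hypothesis rather than with boundedness, but the mechanism (combine the bad direction with a strictly positive weight and rescale onto $\sum_k\lambda_k=1$) is identical, and both proofs finish by invoking Proposition~\ref{faces2}. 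One point you should make explicit: in the subcase $\sum_k d_k < 0$, the renormalized family $(\mu + td)/(1+t\sum_k d_k)$ is unbounded only if the numerator does not vanish as the denominator tends to $0^+$; if $d$ happens to be a negative multiple of $\mu$, the family is constant and no contradiction arises. This degenerate case must be excluded by the full-dimensionality of $\conv S$ (it would force $\mu^Ty=\mu^Ty'$ for all $y'\in S$), and indeed full-dimensionality is also what you need for $N(y)$ to be pointed so that its extreme rays are in bijection with the facets through $y$. The paper rules out exactly this configuration at the corresponding step (``$\lambda^+\neq-\lambda^-$ \dots\ which is not possible if $\conv S$ is full-dimensional''), so your proof is complete once you add that one sentence.
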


\begin{proof}
\begin{enumerate}
\item Suppose that $y$ is an extreme point of a dominated facet $F$, then any normal $\lambda$ to $F$ pointing to the interior of $\conv S$ has at least one negative or zero component. If $W^0_{S_{\ext}}(y) \subset W^0$ then by definition there is no weight $\lambda \in W^0_{\ext}$ (i.e., {satisfying}  $\sum_{k=1}^p \lambda_k = 1$) defined by simultaneously positive and negative/zero components such that $\lambda^Ty \leq \lambda^Ty'$ for all $y' \in S$. Hence, any such normal $\lambda$ to $F$ pointing to the interior of $\conv S$ should {satisfy} either $\sum_{k = 1}^p \lambda_k = 0$ or $\sum_{k = 1}^p \lambda_k = -1$ (with a possible multiplication of the weight by a positive constant). This implies in both cases, negative components in the weight $\lambda$.

We suppose first that a normal of $F$ denoted $\lambda^-$ and  pointing to the interior of $\conv S$ {satisfies} $\sum_{k = 1}^p \lambda^-_k = -1$. Since $y$ is an extreme point of $F$, we have therefore $(\lambda^-)^Ty \leq (\lambda^-)^Ty'$ for all $y' \in S$. Let $\lambda^+$ be a weight in $W^0_S(y)$ then $\lambda^+ \neq -\lambda^-$, otherwise we have simultaneously $(\lambda^+)^Ty \leq (\lambda^+)^Ty'$ and $(- \lambda^+)^Ty \leq (- \lambda^+)^Ty'$ for all $y' \in S$, which is not possible if $\conv S$ is full-dimensional. Next, the weight $\lambda^\alpha := \alpha\lambda^+ + (1 - \alpha)\lambda^-$ with $\alpha \in [0,1]$, {satisfies} $(\lambda^\alpha)^Ty \leq (\lambda^\alpha)^Ty'$ for all $y' \in S$. In particular, if $\alpha = \frac{1}{2}$, we have $\sum_{k=1}^p \lambda^\alpha_k = 0$ and $\lambda^\alpha$ has necessarily simultaneously positive and negative components. Moreover, there exists $\epsilon > 0$ such that if $\alpha = \frac{1}{2} + \epsilon$, $\lambda^\alpha$ still has simultaneously positive and negative components and $\sum_{k=1}^p \lambda^\alpha_k > 0$. It is therefore possible to multiply $\lambda^\alpha$ by a positive constant in order to obtain a weight $\lambda^\alpha$ such that $\sum_{k=1}^p \lambda^\alpha_k = 1$. In other words, we obtain a weight in $W^0_{S_{\ext}}(y) \setminus W^0$ which contradicts the assumption that $W^0_{S_{\ext}}(y) \subset W^0$.

We suppose now that a normal of $F$ denoted by $\lambda^0$ and pointing to the interior of $\conv S$ {satisfies} $\sum_{k = 1}^p \lambda^0_k = 0$. Necessarily, $\lambda^0$ has simultaneously positive and negative components. Using the same idea as above, we obtain directly a contradiction with the assumption that $W^0_{S_{\ext}}(y) \subset W^0$. 

Finally, the normals to the only facets such that $y$ is an extreme point are defined by the extreme points of $W^0_{S_{\ext}}(y)$. As these extreme points are located in $W^0$,  Proposition~\ref{faces2}(1) implies that these facets are nondominated.
\item We can show analogously that the normals to the only facets such that $y$ is an extreme point are defined by the extreme points of $W^0_{S_{\ext}}(y)$.
As these extreme points are located in $\cl(W^0)$, Proposition~\ref{faces2} implies that these facets are weakly nondominated (normal defined by a weight in $\fr(W^0)$) or nondominated (normal defined by a weight in $W^0$).
\end{enumerate}

\end{proof}\medskip

\subsection{Algorithms}

In the following, we use the properties stated in Subsection~\ref{furana} in order to propose two new algorithms. Both algorithms are evolutions of Algorithm~\ref{Balloon}, using a particular initialization, and reducing the number of iterations. 
Algorithm~\ref{Balloon} can be seen as an exact version of the method proposed by \citet{SKW02}. 
No order is used here to select a facet (as all must be explored) 
and nothing is done to avoid the exploration of dominated facets. 
It ensures the computation of all extreme points of $\conv Y$, 
and all nondominated extreme points can next be deduced.


\begin{algorithm}[h]
\caption{Algorithm {\tt Inflate\_Balloon}}
\label{Balloon}
\begin{algorithmic}[1]
\REQUIRE An instance of an MOILP
\ENSURE $S$: the set of all extreme points of $\conv Y$
\STATE  Let $S$ be a set of known points on the boundary of $\conv Y$ such that $\conv S$ is full-dimensional 
\REPEAT
\STATE Compute/Update $\conv S$
\STATE Choose any unexplored facet $F$ to obtain a weight $\lambda$ defined by its normal
\STATE Solve the weighted sum problem with objective function $\lambda^Tz$ (even if $\lambda \not\in \mR^p_>$)
\STATE Let $x$ be the optimal solution, $y := z(x)$ and $y'$ be a point of $F$
\IF{$\lambda^Ty' = \lambda^Ty$}
\STATE A (part of a) facet of $\conv Y$ is identified
\ELSE  
\STATE A new point is found and added to $S$
\ENDIF
\UNTIL{All facets of $\conv S$ have been explored}
\end{algorithmic}
\end{algorithm}

We first consider the use of dummy points as proposed in \citep{MSOzpKok}. The introduction of these points in the list of supported nondominated points has been proposed to modify the structure of the convex hull of nondominated extreme points. 
In the following, we analyse the consequence for the sets $W^0(y)$, $W^0_S(y)$ and $W^0_{S_{\ext}}(y)$
defined with respect to $S \cup Y_M$, for $y \in S$. In order to avoid a confusion with/without the potential use of dummy points, we will use the modified notations if dummy points are considered in the weight set decomposition: $W^{0M}(y)$, $W^{0M}_S(y)$ and $W^{0M}_{S_{\ext}}(y)$. These are just notations and the statements of Propositions~\ref{faces}-\ref{MainResult} remain valid.

\begin{ex}\label{exdumm}
We consider again the instance of the Assignment Problem of Example~\ref{diff}. 
We suppose again that $S = \{y^1,y^2,y^3\}$ contains the three points minimizing each objective.
In Figure~\ref{Decomp4}, the weight set $W^0$ is decomposed into the sets $W^{0M}_S(y^1)$, $W^{0M}_S(y^2)$, $W^{0M}_S(y^3)$ 
and also the sets $W^{0M}_S(m^1)$, $W^{0M}_S(m^2)$, $W^{0M}_S(m^3)$. 
As we can see in Figure~\ref{Decomp4}, 
due to the dummy points $m^i$, 
the sets $W^{0M}_{S_{\ext}}(y^i)$ are included in $W^0$. 
Consequently, Proposition~\ref{MainResult} implies that all facets of $\conv (S \cup Y_M)$ 
such that at least one of its extreme points belongs to $S$ are nondominated. 
As the sets $\cl(W^{0M}_S(m^i))$ have extreme points in $\fr(W^0)$, 
Proposition~\ref{W0ext}$(4)$ implies that $W^{0M}_S(m^i) \subset W^{0M}_{S_{\ext}}(m^i)$ 
for all $m^i$. 
It is easy to see in Figure~\ref{Decomp4} that the sets $W^{0M}_{S_{\ext}}(m^i)$ 
are unbounded and have no extreme point outside of $W^0$. 
\end{ex}

\begin{figure}
\center
\includegraphics[width = 0.5\textwidth]{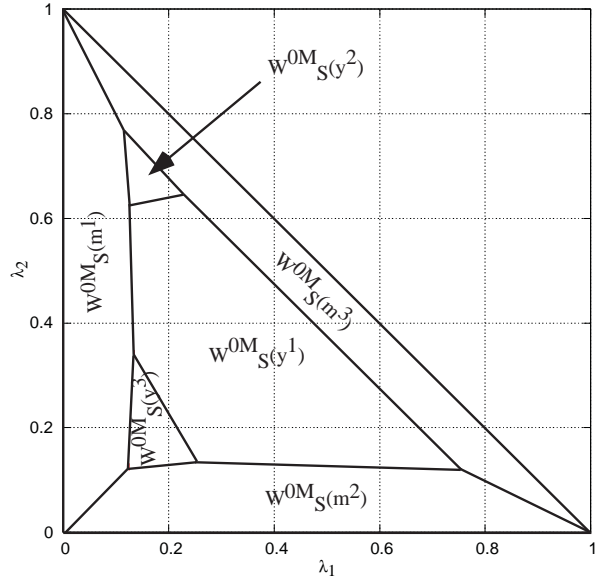}
\caption{Weight set decomposition for Example~\ref{exdumm} introducing dummy points.}
\label{Decomp4}
\end{figure}


Knowing an (extended) weight set decomposition, Proposition~\ref{PropDumm} below provides an explanation about the observations of Example~\ref{exdumm}. This is an extension of Proposition~\ref{objView}(5) proposed by \citep{MSOzpKok}. 

\begin{Lem}\label{W0Boundary}
Let $S$ be a subset of supported nondominated points and let $Y_M$ be the set of dummy points, then for all $y \in S$, $W^{0M}_{S_{\ext}}(y) \subset W^0$.
\end{Lem}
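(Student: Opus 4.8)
The plan is to observe that $W^{0M}_{S_{\ext}}(y)$ is obtained by intersecting the hyperplane $W^0_{\ext}$ (where $\sum_{k=1}^p\lambda_k=1$) with the half-spaces $\lambda^Ty\leq\lambda^Ty'$ for \emph{all} $y'\in(S\cup Y_M)\setminus\{y\}$. Since discarding the constraints coming from $S$ can only enlarge the set, it suffices to prove the inclusion for the set $D(y):=\{\lambda\in W^0_{\ext}:\lambda^Ty\leq\lambda^Tm^q,\ q=1,\dots,p\}$ defined by the dummy constraints alone, and then invoke $W^{0M}_{S_{\ext}}(y)\subseteq D(y)$. Because $m^q=Me_q$, the dummy constraints read $\lambda^Ty\leq M\lambda_q$ for every $q$, so the whole argument is driven by these $p$ inequalities together with the normalization $\sum_k\lambda_k=1$.

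The core step is a self-referential estimate on $t:=\lambda^Ty$. Writing the dummy constraints as $\lambda_q\geq t/M$ and multiplying by the strictly positive coordinate $y_q>0$ (which holds since $y\in S\subseteq Y_N$ and $y^I>0$), summation over $q$ gives $t=\sum_q\lambda_qy_q\geq(t/M)\sum_q y_q$. With $\sigma:=\sum_q y_q>0$ this reads $t(1-\sigma/M)\geq 0$, hence $t\geq 0$ provided $M>\sigma$. I would then upgrade this to $t>0$: if $t=0$, the dummy constraints force $\lambda_q\geq 0$ for all $q$, and $\sum_q\lambda_qy_q=0$ with $y_q>0$ forces $\lambda=0$, contradicting $\sum_q\lambda_q=1$. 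Once $t>0$, the inequalities $\lambda_q\geq t/M>0$ show that every component of $\lambda$ is strictly positive, so $\lambda\in W^0$. This proves $W^{0M}_{S_{\ext}}(y)\subseteq W^0$; the inclusion is strict because $W^{0M}_{S_{\ext}}(y)$ is a closed polyhedron (the dummy-point version of Proposition~\ref{W0ext}(2)) whereas $W^0$ is relatively open, so the two sets cannot coincide.

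The step I expect to require the most care is making the hypothesis ``$M$ large enough'' explicit and checking its compatibility with the rest of the construction. The estimate needs $M>\sigma=\sum_k y_k$ for the point $y$ at hand; since $y\in S\subseteq Y_N$ satisfies $y_k\leq y^N_k$, it is enough to take $M>\sum_k y^N_k$, and I would verify that this is implied by (or can be imposed together with) the lower bounds on $M$ already required so that the nondominated extreme points of $\conv Y$ survive as nondominated extreme points of $\conv(Y\cup Y_M)$, cf.\ the bounds of \citet{OZ08}. The algebraic manipulations themselves are routine once the reduction to the $p$ dummy constraints has been made.
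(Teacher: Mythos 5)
Your proof is correct, but it follows a genuinely different route from the paper's. The paper argues topologically: it shows that $W^{0M}_{S_{\ext}}(y)$ cannot meet $\fr(W^0)$ (for $\lambda\in\fr(W^0)$ with $\lambda_k=0$ the dummy constraint $\lambda^Ty\leq\lambda^Tm^k=M\lambda_k=0$ contradicts $\lambda^Ty>0$), and then concludes via connectedness of the polyhedron together with the fact that it has nonempty intersection with $W^0$ --- a set touching $W^0$ and avoiding $\fr(W^0)$ must lie entirely in $W^0$. You instead give a direct, quantitative argument: every $\lambda$ in the set satisfies $\lambda_q\geq t/M$ with $t=\lambda^Ty$, and the self-referential estimate $t\geq t\sigma/M$ plus the normalization forces $t>0$ and hence $\lambda>0$ componentwise. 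Your computation is sound (the reduction to the dummy constraints alone, the multiplication by $y_q>0$, and the exclusion of $t=0$ are all valid, using the paper's standing assumption $y^I>0$), and it avoids any appeal to connectedness. The price is the explicit hypothesis $M>\sum_k y_k$, which you correctly flag; note that the paper's proof also depends on $M$ being large, but only implicitly, through the unproved assertion that $W^{0M}_{S_{\ext}}(y)\cap W^0\neq\emptyset$ (which requires some $\lambda\in W^0$ with $M\lambda_q\geq\lambda^Ty$ for all $q$). So your version makes the dependence on $M$ visible and checkable against the bounds of \citet{OZ08}, which is arguably a gain in rigor, while the paper's version is shorter and defers the $M$-dependence to the general setup of the dummy points. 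Your closing remark on strictness of the inclusion (closed polyhedron versus relatively open $W^0$) is also correct.
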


\begin{proof}
To show this statement, we just show that for any $y \in S$, $W^{0M}_{S_{\ext}}(y)$ has no intersection with $\fr(W^0)$, once we insert the dummy points in the weight set decomposition. Indeed, for any supported nondominated point $y \in S$, $W^{0M}_{S_{\ext}}(y) \cap W^0 \neq \emptyset$ and moreover $W^{0M}_{S_{\ext}}(y)$ is a polyhedron that is thus connected.

Let $y \in S$ and $\lambda \in W^{0M}_{S_{\ext}}(y)$, then we have $\lambda^Ty \leq \lambda^Ty'$ for all $y' \in S \cup Y_M$. In particular, we have $\lambda^Ty \leq \lambda^Tm^i$ for all $i \in \{1,\ldots,p\}$. Suppose that $\lambda \in \fr(W^0)$, then we have $\lambda \in \cl(W^0)$ with $\lambda_k = 0$ for at least one $k \in \{1,\ldots, p\}$. The inequality $\lambda^Ty \leq \lambda^Tm^k$ becomes equivalent to $\lambda^Ty \leq 0$. As $y$ has only positive components and $\lambda \in \cl(W^0)$, we necessarily have $\lambda^Ty > 0$ which is a contradiction to $\lambda^Ty \leq 0$. 
\end{proof}\medskip

\begin{prop}\label{PropDumm}
Let $S$ be a subset of supported nondominated points and let $Y_M$ be the set of dummy points.
All facets of $\conv (S \cup Y_M)$ are nondominated, except the one defined by $\{m^1,\ldots,m^p\}$.
\end{prop}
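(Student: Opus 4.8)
The plan is to prove Proposition~\ref{PropDumm} by combining Lemma~\ref{W0Boundary} with Proposition~\ref{MainResult}(1), and then separately handle the single exceptional facet $\{m^1,\ldots,m^p\}$. The statement asserts two things: every facet of $\conv(S\cup Y_M)$ that is \emph{not} the dummy facet $\conv\{m^1,\ldots,m^p\}$ is nondominated, and the dummy facet itself is (necessarily) dominated.

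First I would establish the nondominated claim. Every facet $F$ of $\conv(S\cup Y_M)$ other than the dummy facet must have at least one extreme point $y$ lying in $S$, since the only facet whose extreme points are all dummy points is $\conv\{m^1,\ldots,m^p\}$ itself (this uses the fact that the dummy points $m^q = Me_q$ for large $M$ form a single simplicial facet, and any other facet mixes in at least one genuine point from $S$). Take such a $y\in S$ that is an extreme point of $F$. By Lemma~\ref{W0Boundary}, we have $W^{0M}_{S_{\ext}}(y)\subset W^0$, so the hypothesis of Proposition~\ref{MainResult}(1) (applied with the dummy-augmented weight sets, whose governing propositions remain valid as noted in the text) is satisfied. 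Proposition~\ref{MainResult}(1) then immediately yields that every facet of $\conv(S\cup Y_M)$ having $y$ as an extreme point is nondominated; in particular $F$ is nondominated. Since $F$ was an arbitrary non-dummy facet, this settles the first part.

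It remains to argue that the dummy facet $\conv\{m^1,\ldots,m^p\}$ is dominated. Here I would compute the normal $\lambda$ to the hyperplane through $m^1,\ldots,m^p$ pointing into the interior of $\conv(S\cup Y_M)$. Since $m^q = Me_q$, the points $m^1,\ldots,m^p$ all lie on the hyperplane $\sum_{k=1}^p \lambda_k y_k = M$ with $\lambda = (1,\ldots,1)^T$ (up to scaling); this hyperplane has all points of $S$ strictly on the side $\sum_k y_k < M$ because each $y\in S$ has $\sum_k y_k$ much smaller than $M$ for $M$ chosen large enough. Consequently the inner normal of this facet points in the direction $-(1,\ldots,1)^T$, i.e.\ a normal pointing to the interior of $\conv(S\cup Y_M)$ has all components \emph{negative}. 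By the facet characterization recalled after the definition of nondominated faces (a minimization facet is nondominated iff its interior-pointing normal is strictly positive), this facet cannot be nondominated; its interior normal has strictly negative components, so it is dominated.

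The main obstacle I anticipate is the bookkeeping that isolates $\conv\{m^1,\ldots,m^p\}$ as the \emph{unique} all-dummy facet and guarantees that every other facet genuinely contains a point of $S$ as a vertex. This relies on the choice of $M$ large enough (as cited from \citep{OZ08}) so that the dummy points sit ``far out'' along the axes and all points of $Y_{SN1}$ remain extreme in $\conv(S\cup Y_M)$; one must ensure no spurious facet is spanned entirely by dummy points other than the obvious simplex they form. Once this combinatorial fact is in place, the nondominated part is a direct application of Lemma~\ref{W0Boundary} and Proposition~\ref{MainResult}(1), and the dominated part is the short normal-vector computation above.
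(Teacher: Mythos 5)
Your proposal is correct and follows essentially the same route as the paper: the paper's proof likewise invokes Lemma~\ref{W0Boundary} to get $W^{0M}_{S_{\ext}}(y)\subset W^0$ for every $y\in S$ and then applies Proposition~\ref{MainResult} to conclude that every facet with an extreme point in $S$ is nondominated, leaving only the all-dummy facet $\conv\{m^1,\ldots,m^p\}$ as the exception. Your additional explicit check that this facet is dominated (its inner normal being a negative multiple of $(1,\ldots,1)^T$) is a harmless refinement that the paper leaves implicit.
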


\begin{proof}
According to Lemma~\ref{W0Boundary}, for all $y \in S$, $W^{0M}_{S_{\ext}}(y) \subset W^0$. Hence, according to Proposition~\ref{MainResult}, all facets of $\conv (S \cup Y_M)$ for which $y$ is an extreme point are nondominated. 
Overall, all facets of $\conv (S \cup Y_M)$ are nondominated, except the facet $F_M := \conv\{m^1,\ldots,m^p\}$
whose extreme points are all dummy points.
\end{proof}\medskip

We can now propose a first improvement of Algorithm~\ref{Balloon}. This algorithm initializes the set $S$ of known supported points with one nondominated supported point (for example using the weight $(\frac{1}{p},\ldots,\frac{1}{p})$). To complete the initialization, the set of dummy points $Y_M$ is computed. $\conv(S \cup Y_M)$ is therefore a full-dimensional polytope and Proposition~\ref{PropDumm} implies that only one of its facets is {currently} dominated, namely $F_M =  \conv\{m^1,\ldots,m^p\}$. Next, the main loop (lines 2-12) of Algorithm~\ref{Balloon} could simply be used to compute the set $\conv (Y \cup Y_M)$, but this would result again in the computation of dominated points. To avoid this, Algorithm~\ref{DumDicho} computes $\conv (Y_{SN1} \cup Y_M)$ (so that $S$ contains the set of nondominated extreme points at termination of the algorithm). This can be realized with a slight modification in the lines 2-12 of Algorithm~\ref{Balloon}: the facet $F_M$ is not considered in line 4 of {Algorithm~\ref{DumDicho}}.
Proposition~\ref{propDumDicho} justifies the correctness of the algorithm.

\begin{algorithm}[h]
\caption{Algorithm {\tt Dummy\_Dichotomy}}
\label{DumDicho}
\begin{algorithmic}[1]
\REQUIRE An instance of an MOILP
\ENSURE $S$ contains the set of nondominated extreme points of $\conv Y$
\STATE Initialize $S$ with one supported point and determine the set of dummy points $Y_M$
\REPEAT
\STATE Compute/Update $\conv(S \cup Y_M)$
\STATE Choose any unexplored facet $F$ to obtain a weight $\lambda$ defined by its normal (except the facet defined by $\{m^1,\ldots,m^p\}$)
\STATE Solve the weighted sum problem with objective function $\lambda^Tz$ 
\STATE Let $x$ be the optimal solution, $y := z(x)$ and $y'$ be a point of $F$
\IF{$\lambda^Ty' = \lambda^Ty$}
\STATE A (part of a) facet of $\conv (Y_{SN1} \cup Y_M)$ is identified
\ELSE  
\STATE A new nondominated supported point is found and added to $S$
\ENDIF
\UNTIL{All nondominated facets of $\conv(S \cup Y_M)$ have been explored}
\end{algorithmic}
\end{algorithm}

\begin{prop}\label{propDumDicho}
Algorithm~\ref{DumDicho} generates only supported nondominated points and $S$ contains at termination the set of nondominated extreme points of $\conv Y$.
\end{prop}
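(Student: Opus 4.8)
The plan is to verify the two claims separately, both resting on Proposition~\ref{PropDumm}, which guarantees that at every stage the only dominated facet of $\conv(S \cup Y_M)$ is $F_M = \conv\{m^1,\ldots,m^p\}$. First I would argue that the algorithm \emph{generates only supported nondominated points}. At each iteration, line~4 selects an unexplored facet $F \neq F_M$ to define the weight $\lambda$ given by its inward-pointing normal. By Proposition~\ref{PropDumm}, since $F \neq F_M$ it has at least one extreme point in $S$, and every such facet is nondominated; hence its normal $\lambda$ lies in $\mathbb{R}^p_>$ (more precisely, its positive multiple belongs to $W^0$). Solving the weighted sum problem (MOIP$_\lambda$) with $\lambda \in \mathbb{R}^p_>$ therefore yields a supported nondominated point $y = z(x)$. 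Thus any point added to $S$ in line~10 is supported nondominated, establishing the first assertion. A small care-point here is that the argument must hold at \emph{every} iteration, i.e.\ I would note that Proposition~\ref{PropDumm} is stated for an arbitrary current set $S$ of supported nondominated points, so it reapplies after each update, maintaining the invariant.

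Next I would address the termination claim: that $S$ contains \emph{all} nondominated extreme points of $\conv Y$. The idea is that the loop terminates precisely when every nondominated facet of $\conv(S \cup Y_M)$ has been explored and confirmed (the test $\lambda^T y' = \lambda^T y$ in line~7 certifies that $F$ is a genuine facet of $\conv(Y_{SN1} \cup Y_M)$). I would argue that at termination we have $\conv(S \cup Y_M) = \conv(Y_{SN1} \cup Y_M)$ restricted to its nondominated facets. The key observation is that any nondominated extreme point $\bar y \in Y_{SN1}$ not yet in $S$ would, by definition of a nondominated extreme point and by the weight-set decomposition of Proposition~\ref{compW0(y)}(3)--(4), correspond to a region $W^{0M}(\bar y)$ of dimension $p-1$; if $\bar y \notin S$, then some explored facet $F$ would fail the equality test $\lambda^T y' = \lambda^T y$ (because the optimal value of (MOIP$_\lambda$) would strictly improve over the current facet value), contradicting the loop's termination condition. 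Hence every element of $Y_{SN1}$ must be an extreme point of $\conv(S \cup Y_M)$ lying in $S$.

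I would complete the termination argument by also confirming finiteness: there are finitely many nondominated extreme points in $Y_{SN1}$, and each iteration that executes line~10 adds a genuinely new such point to $S$ (the strict inequality $\lambda^T y < \lambda^T y'$ forces $y \notin S$), while iterations executing line~8 permanently mark a facet as explored. Since both the number of points and the number of facets of $\conv(Y_{SN1} \cup Y_M)$ are finite, the loop must halt.

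The main obstacle I anticipate is the termination/completeness direction, specifically the rigorous justification that no nondominated extreme point is missed. This requires showing that exploration cannot stop with an incomplete $S$: one must rule out the pathological scenario in which all \emph{current} facets pass the equality test yet $\conv(S \cup Y_M) \neq \conv(Y_{SN1} \cup Y_M)$. The clean way to close this gap is to invoke Proposition~\ref{compW0(y)}(5) (the weight-set covering optimality condition): when every nondominated facet has been confirmed, the union $\bigcup_{y \in S} W^{0M}_S(y)$ covers $W^0$, which by Proposition~\ref{compW0(y)}(5) forces $Y_{SN1} \subseteq S$. Tying the facet-exploration termination condition to this covering condition—so that "all nondominated facets explored" genuinely implies "weight set fully decomposed"—is where the essential work lies, and it is exactly what Propositions~\ref{PropDumm} and \ref{MainResult} were developed to supply.
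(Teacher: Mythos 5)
Your proposal follows essentially the same route as the paper: Proposition~\ref{PropDumm} gives that every facet other than $F_M$ is nondominated, hence every weight used lies in $\mR^p_>$ and every generated point is supported nondominated (and, as you note, the invariant persists because Proposition~\ref{PropDumm} applies to any current set $S$ of supported nondominated points); completeness at termination is then reduced to Proposition~\ref{compW0(y)}(5). The finiteness remark is a harmless addition the paper omits.

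The one place where your write-up does not quite close is the step you yourself flag as ``where the essential work lies.'' As written, you invoke Proposition~\ref{compW0(y)}(5) from the fact that $\bigcup_{y\in S}W^{0M}_S(y)$ covers $W^0$ --- but that covering holds trivially at \emph{every} iteration (the sets $W^0_S(y)$ are overestimates of the true cells), so it cannot by itself certify completeness; Proposition~\ref{compW0(y)}(5) needs the covering by the true cells $W^{0M}(y)$. The paper supplies the missing link as follows: at termination every nondominated facet of $\conv(S\cup Y_M)$ has passed the equality test, and by Proposition~\ref{faces}/\ref{faces2} the weights of these facets are exactly the extreme points of the sets $W^{0M}_S(y)$, $y\in S$; passing the test certifies that each such extreme point actually belongs to $W^{0M}(y)$. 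Since $W^{0M}(y)$ is convex and $W^{0M}(y)\subseteq W^{0M}_S(y)$, containing all extreme points of $W^{0M}_S(y)$ forces $W^{0M}_S(y)=W^{0M}(y)$, and only then does the covering condition of Proposition~\ref{compW0(y)}(5) apply. Your intermediate heuristic (``some explored facet would fail the equality test if $\bar y\notin S$'') is the right intuition, but to make it rigorous you need exactly this extreme-point argument (a linear function nonnegative at all extreme points of a polytope is nonnegative on the polytope), rather than a direct appeal to the dimension of $W^{0M}(\bar y)$.
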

\begin{proof}
As only nondominated facets of $\conv (S \cup Y_M)$ are considered to define weights, only weights $\lambda \in \mR^p_>$ are exploited in the algorithm. These weights allow either to identify a (part of a) facet of $\conv Y$ (line 8) or to find a new supported nondominated point that is added to $S$ (line 10). After the initialization and after each update of $S$, the polytope $\conv (S \cup Y_M)$ is full-dimensional, and only one of its facets is dominated (the one defined by the dummy points) according to Proposition~\ref{PropDumm}. 

It remains to show that at termination of Algorithm~\ref{DumDicho}, $S$ contains all nondominated extreme points.  At the end of this algorithm, all nondominated facets of $\conv (S \cup Y_M)$ are identified as nondominated facets of $\conv (Y_{SN1} \cup Y_M)$. Consequently, for all $y \in S$, all extreme points of $W^{0M}_S(y)$ are identified as points of $W^{0M}(y)$. As for all $y \in S$, $W^{0M}(y) \subseteq W^{0M}_S(y)$, we have therefore $W^{0M}_S(y) = W^{0M}(y)$. The weight set decomposition is therefore complete and   
Proposition~\ref{compW0(y)}(5) implies that $S$ contains all nondominated extreme points.
\end{proof}\medskip

Algorithm~\ref{DumDicho} is only correct with an appropriate initialization, and in particular with an appropriate choice of dummy points. However, the use of dummy points has one main drawback: the large value of $M$ used as a component of the dummy points may be the cause of numerical imprecisions in the implemented algorithms. Hence, we propose next another improved version of Algorithm~\ref{Balloon}. In particular, another initialization is necessary. Figure~\ref{Decomp4} from Example~\ref{exdumm} illustrates the role of the dummy points. These points are the only extreme points $y$ of $\conv (Y \cup Y_M)$ such that $\cl(W^{0M}(y))$ has a  facet in $\fr(W^0)$. Another set of points with the same property is necessary to replace the dummy points. Hence, a natural idea is to determine all nondominated extreme points $y$ such that $\cl(W^0(y))$ has a facet in $\fr(W^0)$, i.e. the nondominated extreme points of the $p$ associated problems with the $(p-1)$ objectives given by $(z^1,\ldots,z^{k-1},z^{k+1},\ldots,z^p)$ {for $k=1,\dots,p$}. 
As one objective 
function is ignored, these points may correspond to several possible points for the $p$-objective problem, some of which may be dominated. According to Proposition~\ref{sup1}, these points are nondominated extreme points of the $p$-objective problem if and only if they are nondominated. 

\begin{prop}[\citealp{3NE}]\label{sup1}
Let $P$ be a problem with $p$ objectives and let $P_I$ be a subproblem
given by objectives indexed by $I \subset \{1,\ldots,p\}$. Let $y$ be {a point in $P$ such that its projection onto $P_I$ is} a nondominated extreme point of $P_I$. 
Then if $y$ is a nondominated point of $P$ it is also a nondominated extreme point of $P$.
\end{prop}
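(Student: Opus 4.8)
The plan is to treat the two halves of the conclusion separately. Nondomination of $y$ in $P$ is given by hypothesis, so the only real content is to show that $y$ is an \emph{extreme point} of $\conv Y$, i.e.\ a vertex. I would argue by contradiction: assume $y$ is nondominated but not extreme, so that $y=\sum_i \alpha_i v^i$ for finitely many outcome vectors $v^i\in Y$ (which may be taken to be vertices of $\conv Y$, hence elements of $Y$), with $\alpha_i>0$, $\sum_i \alpha_i=1$, $v^i\neq y$, and at least two distinct $v^i$. Write $\pi_I$ for the projection of $\mR^p$ onto the coordinates indexed by $I$, so that the outcome set of $P_I$ is $\pi_I(Y)$ and, by hypothesis, $\pi_I(y)$ is a nondominated vertex of $\conv\,\pi_I(Y)$.

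First I would push the convex combination through $\pi_I$, obtaining $\pi_I(y)=\sum_i\alpha_i\,\pi_I(v^i)$ with each $\pi_I(v^i)\in\conv\,\pi_I(Y)$. Since $\pi_I(y)$ is a vertex of $\conv\,\pi_I(Y)$, a strict convex combination can equal it only trivially, which forces $\pi_I(v^i)=\pi_I(y)$ for every $i$. Hence all points in the decomposition agree with $y$ on every coordinate of $I$, and can differ from $y$ only on the dropped coordinates $J:=\{1,\dots,p\}\setminus I$.

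The crux is the behaviour on $J$. For each $k\in J$ we still have $y_k=\sum_i\alpha_i v^i_k$ with strictly positive weights. In the situation relevant to the algorithm, where exactly one objective is dropped and $|J|=1$, this is a strict convex combination of reals in the single remaining coordinate; since the $v^i$ are not all equal to $y$ there, at least one $v^i$ satisfies $v^i_k<y_k$. That $v^i$ coincides with $y$ on all of $I$ and is strictly smaller in the coordinate $k$, so $v^i\leq y$ and $v^i\neq y$, i.e.\ $v^i$ dominates $y$, contradicting nondomination. I expect this to be the genuine obstacle: the projection step is purely formal, but controlling the values on the dropped objective is exactly where the hypotheses are spent, and it is precisely the fact that only one coordinate survives in $J$ that lets the convex combination manufacture a dominator. (For $|J|\geq 2$ the combination may place some $v^i$ incomparable to $y$, and the conclusion can genuinely fail, so keeping $|J|=1$ is essential.)

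Finally, I would record an equivalent weight-based route in the spirit of Section~\ref{furana}. Since $\pi_I(y)$ is a nondominated vertex of $\conv\,\pi_I(Y)$, it is exposed by some strictly positive $\mu\in\mR^{|I|}_{>}$ that uniquely minimizes $\mu^T\pi_I(\cdot)$ over $\conv\,\pi_I(Y)$. Lifting $\mu$ to a weight $\lambda\in\mR^p_>$ with small positive entries on $J$, one checks that $y$ is the unique lexicographic minimizer over $Y$ of first $\mu^T\pi_I(\cdot)$ and then the dropped coordinate: the first criterion confines the minimizers to the fibre over $\pi_I(y)$, and the second singles out $y$ by nondomination. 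A lexicographic minimizer of linear objectives is a vertex of $\conv Y$, and $\lambda\in\mR^p_>$ re-certifies nondomination, so $y\in Y_{SN1}$.
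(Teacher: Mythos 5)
The paper offers no proof of Proposition~\ref{sup1} --- it is imported from \citet{3NE} as a black box --- so there is nothing in-text to compare your argument against; I can only assess it on its own terms. On those terms it is essentially correct for the case the paper actually uses, namely $|I|=p-1$, and your side remark that the statement can fail for smaller $I$ is accurate: for $p=3$, $I=\{1\}$ and $Y=\{(0,0,2),(0,2,0),(0,1,1),(1,1,1)\}$, the point $(0,1,1)$ is nondominated and projects onto the nondominated extreme point $0$ of $P_I$, yet it is the midpoint of a segment joining two other points of $Y$ and hence not a vertex of $\conv Y$. So the restriction to $|J|=1$ is not a defect of your proof but a genuine hypothesis that the transcribed statement omits.

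One substantive caveat. In this paper a ``nondominated extreme point'' ($Y_{SN1}$) is by definition a vertex of $\conv Y$ that is also \emph{supported}, and a vertex of $\conv Y$ that is merely nondominated in $Y$ need not be supported (e.g.\ $(3,3)$ in $Y=\{(0,4),(4,0),(3,3)\}$). Your contradiction argument only delivers ``nondominated vertex of $\conv Y$'', so by itself it does not reach the stated conclusion; it can be repaired by running the same projection-plus-dropped-coordinate argument against a hypothetical dominating point $w\in\conv Y$ (showing $y$ is nondominated in $\conv Y$, hence a vertex of $\conv Y+\mR^p_\geqq$, hence supported). Your second, lexicographic route does close this gap in one step, since it produces a $\lambda\in\mR^p_>$ for which $y$ is the \emph{unique} minimizer over $\conv Y$, certifying vertexhood and supportedness simultaneously; the only ingredient to make explicit there is that a point of $Y_{SN1}(P_I)$ admits a weight $\mu\in\mR^{|I|}_>$ exposing it uniquely, which follows from Proposition~\ref{compW0(y)}(3) applied to $P_I$. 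I would keep the lexicographic argument as the proof and demote the convex-combination argument to motivation.
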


{Note that for every nondominated extreme point of a $(p-1)$-objective subproblem, at least one of the corresponding points in the $p$-objective problem is indeed nondominated.} 
We propose thus to initialize the set $S$ of supported points of our second algorithm with the nondominated extreme points that can be obtained using the $p$ associated subproblems with $(p - 1)$ objectives. {Each of these $(p-1)$-objective subproblems can again be solved recursively, i.e., by first initializing with all $(p-1)$ subproblems with $(p-2)$ objectives and then applying dichotomic search as in Algorithm~\ref{RecDicho}. Note, however, that this procedure involves the solution of $O(p!)$ subproblems, many of which are solved repeatedly.} In order to avoid redundancies, the solution of these subproblems should {thus} not be {implemented} recursively. {Indeed, it is sufficient to enumerate all subsets of objectives from $\{1,\dots,p\}$ which leaves us with a total of $O(2^p)$ subproblems to be solved.} After this initialization {phase}, there are obviously facets of $\conv S$ that are dominated. However, our purpose now is only to determine points of $Y_{SN1}$ that have not yet been found with the initialization. Proposition~\ref{facetRecDicho} states that these points always belong to (weakly) nondominated facets of $\conv S$.

\begin{prop}\label{facetRecDicho}
Let $S_{\init}$ be a set of supported points containing the nondominated extreme points that can be obtained using the $p$ associated subproblems with $(p - 1)$ objectives, and let $S$ be a set of supported nondominated points such that $S_{\init} \subset S$. For all $y \in S \setminus S_{\init}$, 
any facet $F$ of $\conv S$ for which $y$ is an extreme point 
is at least weakly nondominated.
\end{prop}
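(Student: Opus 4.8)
The plan is to prove the stronger statement that $W^0_{S_{\ext}}(y)\subseteq\cl(W^0)$ for every $y\in S\setminus S_{\init}$, from which Proposition~\ref{MainResult}(2) immediately yields that every facet of $\conv S$ having $y$ as an extreme point is at least weakly nondominated. Since $y$ is a supported nondominated point we have $W^0_S(y)\neq\emptyset$, so $W^0_{S_{\ext}}(y)$ is a nonempty convex polyhedron meeting $W^0$; it therefore suffices to rule out that $W^0_{S_{\ext}}(y)$ leaves $\cl(W^0)$.

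I would argue by contradiction. If $W^0_{S_{\ext}}(y)\not\subseteq\cl(W^0)$, then, being convex and meeting the relative interior $W^0$ of the simplex $\cl(W^0)$, the polyhedron crosses the relative interior of some facet $H_i:=\{\lambda\in\cl(W^0):\lambda_i=0\}$. Thus there is a weight $\nu\in W^0_{S_{\ext}}(y)$ with $\nu_i=0$ and $\nu_k>0$ for $k\neq i$, together with a direction $d$ (satisfying $d_i<0$ and $\sum_k d_k=0$) such that $\nu+\epsilon d\in W^0_{S_{\ext}}(y)$ for all small $\epsilon>0$. Because $\nu$ carries strictly positive weight on every objective except the $i$-th, it is a strictly positive weight for the $(p-1)$-objective subproblem $P_i$ obtained by deleting objective $i$. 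Here the defining property of $S_{\init}$ enters: $S_{\init}$ contains a nondominated lift of every nondominated extreme point of $P_i$, so $\min\{\nu^Ty':y'\in S_{\init}\}$ already equals the subproblem optimum $\min\{\nu^Ty':y'\in Y\}$. Combined with $\nu^Ty\leq\nu^Ty'$ for all $y'\in S_{\init}\subseteq S$ and $y\in Y$, this forces $\nu^Ty=\min\{\nu^Ty':y'\in Y\}$, i.e.\ the projection $\bar y$ of $y$ (deleting the $i$-th coordinate) is a supported nondominated point of $P_i$.

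I would then split according to the classification of supported points applied to $P_i$. If $\bar y$ is a nondominated \emph{extreme} point of $P_i$, then since $y$ is globally nondominated, Proposition~\ref{sup1} shows $y$ is a nondominated extreme point detected through $P_i$, whence $y\in S_{\init}$, contradicting $y\in S\setminus S_{\init}$. Otherwise $\bar y=\sum_j\alpha_j\bar v^{\,j}$ with $\alpha_j>0$, $\sum_j\alpha_j=1$, where the $\bar v^{\,j}$ are nondominated extreme points of $P_i$ on the optimal face for $\nu$; their lifts $v^j$ lie in $S_{\init}\subseteq S$ and project to $\bar v^{\,j}$. Since the projections agree, $y-\sum_j\alpha_j v^j=\delta e_i$ for some scalar $\delta$, and $\delta\neq0$ because $y$ is an extreme point of $\conv S$ while each $v^j\in S$. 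If $\delta>0$ then $\sum_j\alpha_j v^j\leq y$ with $\sum_j\alpha_j v^j\in\conv S$, so a point of $\conv S$ dominates $y$, contradicting $y\in(\conv S)_N$ (which holds since $y$ is globally nondominated and lies in $\conv S$). If $\delta<0$ then, evaluating the convex combination along the outward direction, $\sum_j\alpha_j\big((\nu+\epsilon d)^Ty-(\nu+\epsilon d)^Tv^j\big)=(\nu+\epsilon d)^T(\delta e_i)=\epsilon\,d_i\,\delta>0$, so some $v^j$ violates $(\nu+\epsilon d)^Ty\leq(\nu+\epsilon d)^Tv^j$, contradicting $\nu+\epsilon d\in W^0_{S_{\ext}}(y)$. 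Either way we obtain a contradiction, so no such $\nu$ exists and $W^0_{S_{\ext}}(y)\subseteq\cl(W^0)$.

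The main obstacle is the non-extreme case: there the optimality of $\bar y$ for the subproblem does not place $y$ in $S_{\init}$, and one must exploit both the extremality of $y$ in $\conv S$ and the averaging identity $\sum_j\alpha_j v^j=y-\delta e_i$ to turn the sign of $\delta$ into a contradiction, either with the nondominance of $y$ or with membership of the perturbed weight in $W^0_{S_{\ext}}(y)$. A secondary technical point, which I would dispatch by a genericity/perturbation argument on the choice of interior and exterior endpoints, is to ensure the boundary crossing passes through the relative interior of a single facet $H_i$ (so that $\nu_k>0$ for all $k\neq i$); this is precisely what makes $\nu$ a strictly positive weight for $P_i$ and lets the defining property of $S_{\init}$ apply.
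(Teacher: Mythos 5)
Your proof is correct, and it shares the paper's overall architecture---both arguments reduce the statement to the inclusion $W^0_{S_{\ext}}(y)\subseteq\cl(W^0)$ for $y\in S\setminus S_{\init}$ and then invoke Proposition~\ref{MainResult}(2)---but the way you establish that inclusion is genuinely different. The paper argues by a dimension count: a weight $\lambda\in W^0_{S_{\ext}}(y)\setminus\cl(W^0)$ together with the $(p-1)$-dimensional cell $W^0_S(y)$ forces $\conv(\{\lambda\}\cup W^0_S(y))\subseteq W^0_{S_{\ext}}(y)$ to meet $\fr(W^0)$ in a $(p-2)$-dimensional polytope; such a full-dimensional piece of a boundary facet of $\cl(W^0)$ means the projection of $y$ is a nondominated extreme point of the corresponding $(p-1)$-objective subproblem, hence $y\in S_{\init}$, a contradiction. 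You instead extract a single boundary-crossing weight $\nu$ and an outward direction, which only yields that the projection $\bar y$ is \emph{supported} for the subproblem, not that it is extreme; you therefore must treat the additional case where $\bar y$ is a non-extremal supported point, which you dispose of correctly via the lift identity $y-\sum_j\alpha_j v^j=\delta e_i$ and the sign analysis on $\delta$. What the paper's route buys is brevity (the dimension count rules out the non-extreme case from the start); what your route buys is explicitness---it spells out exactly how the completeness of $S_{\init}$ on $\fr(W^0)$ enters, a step the paper compresses into ``this contradicts $y\notin S_{\init}$.'' Two points to tighten: your genericity step (exiting through the relative interior of a single facet $H_i$) needs $W^0_S(y)$ to be $(p-1)$-dimensional, which holds because you may assume $y$ is a nondominated extreme point of $\conv S$ (otherwise the statement is vacuous for $y$); and in the case $\delta>0$ the contradiction requires that no point of $\conv S$ dominates $y$, which uses that $y$ is a \emph{supported} nondominated point, not merely a nondominated one.
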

\begin{proof}
As the only points $s \in Y_{SN1}$ such that $\cl(W^0(s))$ has a facet in $\fr(W^0)$ are found in the initialization, all other points $y \in S\cap Y_{SN1}$ {satisfy} $W^0_{S_{\ext}}(y) \subset \cl(W^0)$. Indeed, suppose this is not the case, i.e. that for $y \in S \cap Y_{SN1}$, there is $\lambda \in W^0_{S_{\ext}}(y) \setminus \cl(W^0)$. We have therefore $\conv(\{\lambda\} \cup W^0_S(y)) \subseteq W^0_{S_{\ext}}(y)$. As $y \in Y_{SN1}$ then $\dim W^0_S(y) = p - 1$, and therefore $\conv(\{\lambda\} \cup W^0_S(y))$ intersects $\fr(W^0)$ in a polytope of dimension $p - 2$ (see Figure \ref{illustre} for an illustration with $p = 3$). This contradicts the assumption that $y \not \in S_{\init}$.
Proposition~\ref{MainResult} implies next that any facet for which $y$ is an extreme point is at least weakly nondominated.
\end{proof}\medskip

\begin{figure}
\center
\includegraphics[width = 0.5\textwidth]{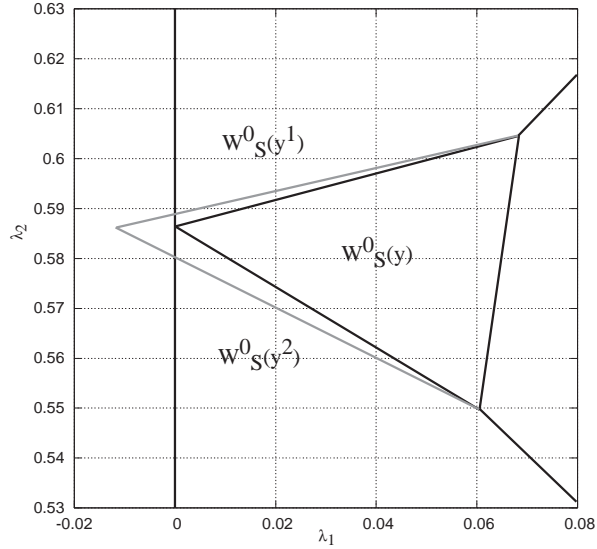}
\caption{If there is $\lambda \in W^0_{S_{\ext}}(y) \setminus \cl(W^0)$ then $\conv(\{\lambda\} \cup W^0_S(y))$ intersects $\fr(W^0)$ in a polytope of dimension $p - 2$.}
\label{illustre}
\end{figure}

Algorithm~\ref{RecDicho} is another variation of Algorithm~\ref{Balloon}, using the initialization suggested by Proposition~\ref{facetRecDicho} and defining weights using only nondominated facets of $\conv S$. Proposition~\ref{propRecDicho} justifies the correctness of the algorithm.

\begin{algorithm}[h]
\caption{Algorithm {\tt Bd\_Dichotomy}}
\label{RecDicho}
\begin{algorithmic}[1]
\REQUIRE An instance of an MOILP
\ENSURE $S$ contains the set of nondominated extreme points of $\conv Y$
\STATE Initialize $S$ using the nondominated extreme points obtained from the $p$ associated ($p-1$)-objective subproblems
\REPEAT
\STATE Compute/Update $\conv S$
\STATE Choose any unexplored facet with a weight $\lambda \in \mR^p_>$
\STATE Solve the weighted sum problem with objective function $\lambda^Tz$ 
\STATE Let $x$ be the optimal solution, $y := z(x)$ and $y'$ be a point of $F$
\IF{$\lambda^Ty' = \lambda^Ty$}
\STATE A (part of a) nondominated facet of $\conv Y$ is identified
\ELSE  
\STATE A new nondominated supported point is found and added to $S$
\ENDIF
\UNTIL{All nondominated facets of $\conv S$ have been explored}
\end{algorithmic}
\end{algorithm}

\begin{prop}\label{propRecDicho}
Algorithm~\ref{RecDicho} generates only supported nondominated points
and $S$ contains at termination the set of nondominated extreme points of $\conv Y$.
\end{prop}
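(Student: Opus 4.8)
The plan is to prove the two assertions separately, following the template of the correctness proof for Algorithm~\ref{DumDicho} (Proposition~\ref{propDumDicho}), with the initialization via $(p-1)$-objective subproblems playing the role that the dummy points played there. The whole argument will be organized around the identity $W^0_S(y) = W^0(y)$ for all $y \in S$, which by Proposition~\ref{compW0(y)}(5) is equivalent to $Y_{SN1} \subseteq S$.

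First I would establish that every point produced is a supported nondominated point. The initial points are supported nondominated by construction: each one projects onto a nondominated extreme point of a $(p-1)$-objective subproblem and is itself nondominated in the $p$-objective problem, so by Proposition~\ref{sup1} it belongs to $Y_{SN1}$. For the points generated in the loop, line~4 restricts the choice to facets whose normal satisfies $\lambda \in \mR^p_>$; hence every weighted sum problem solved in line~5 uses a strictly positive weight, and any optimal solution of (MOIP$_\lambda$) with $\lambda \in \mR^p_>$ has a supported nondominated image. This proves the first claim and, after filtering of non-extreme points, keeps $S \subseteq Y_{SN1}$ throughout the execution.

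For the second claim it suffices to show that at termination $\cl(W^0_S(y))$ and $\cl(W^0(y))$ coincide for every $y \in S$. Since $W^0(y) \subseteq W^0_S(y)$ always holds and both closures are polytopes contained in the simplex $\cl(W^0)$, I only need the two closures to share the same extreme points, which I would split according to whether they lie in the interior $W^0$ or on the boundary $\fr(W^0)$. For the interior extreme points, Proposition~\ref{faces} identifies them with the weights of nondominated facets of $(\conv S)_N$; the termination condition (line~12) guarantees that all such facets have been explored, and each was found in line~7 to satisfy $\lambda^Ty' = \lambda^Ty$, i.e.\ to be a genuine facet of $\conv Y$. Hence every interior extreme point of $W^0_S(y)$ is already an extreme point of $W^0(y)$.

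The main obstacle is the treatment of the boundary extreme points in $\fr(W^0)$, since the algorithm never explores facets whose normal has a zero component, and this is precisely where the initialization is needed. I would invoke Proposition~\ref{facetRecDicho}, whose proof shows that the only nondominated extreme points whose weight set has a facet in $\fr(W^0)$ are those computed during initialization, so that every $y \in S \setminus S_{\init}$ satisfies $W^0_{S_{\ext}}(y) \subset \cl(W^0)$; in particular the points added in the loop never create a new facet on $\fr(W^0)$ and hence never disturb the boundary decomposition. On each boundary face $\{\lambda_k = 0\}$ the objective value $\lambda^Ty$ depends only on the objectives $j \neq k$, so the restriction of the true decomposition to that face is exactly the weight set decomposition of the corresponding $(p-1)$-objective subproblem, all of whose nondominated extreme points were enumerated at initialization and lifted via Proposition~\ref{sup1} into $S_{\init} \subseteq S$. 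Consequently the boundary extreme points of $W^0_S(y)$ also agree with those of $W^0(y)$. Combining the interior and boundary cases gives $\cl(W^0_S(y)) = \cl(W^0(y))$, hence $W^0_S(y) = W^0(y)$ for all $y \in S$, and Proposition~\ref{compW0(y)}(5) then yields $Y_{SN1} \subseteq S$. Finally, since $Y_{SN1}$ is finite and each iteration either confirms a facet or adds one of these finitely many points, the algorithm terminates, which completes the proof.
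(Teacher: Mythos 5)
Your proof is correct and follows essentially the same route as the paper's: establish that line~4 restricts the algorithm to weights $\lambda \in \mR^p_>$ (so only supported nondominated points are generated), then show $W^0_S(y) = W^0(y)$ at termination by treating the extreme points of the weight sets in $W^0$ via the termination condition and Proposition~\ref{faces}, and those on $\fr(W^0)$ via the $(p-1)$-objective initialization, before concluding with Proposition~\ref{compW0(y)}(5). The only additions are your explicit termination argument and the slightly more detailed justification that the boundary faces of $\cl(W^0)$ carry the weight set decompositions of the $(p-1)$-objective subproblems, both of which are consistent with what the paper leaves implicit.
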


\begin{proof}
As only nondominated facets of $\conv S$ are considered to define weights (line 4), only weights $\lambda \in \mR^p_>$ are exploited in the algorithm. Therefore, only supported nondominated points are generated.

It remains to show that at termination of Algorithm~\ref{RecDicho}, $S$ contains all nondominated extreme points.  At the end of this algorithm, all nondominated facets of $\conv S$ are identified as nondominated facets of $\conv Y$. Consequently, for all $y \in S\setminus S_{\init}$, all extreme points of $W^0_S(y)$ that are located in $W^0$ are identified as points of $W^0(y)$. Next, for any $y \in S\setminus S_{\init}$, the extreme points of $W^0(y)$ that are located in $\fr(W^0)$ are necessarily common extreme points with $W^0(y')$ where $y' \in S_{\init}$, and these extreme points are identified with the computation of $S_{\init}$. Thus, it is not necessary to consider weakly nondominated facets in the main loop of Algorithm~\ref{RecDicho}. Finally, for all $y \in S$, $W^0(y) \subseteq W^0_S(y)$, we have therefore $W^0_S(y) = W^0(y)$. The weight set decomposition is therefore complete and  Proposition~\ref{compW0(y)}(5) allows to conclude that $S$ contains all nondominated extreme points.
\end{proof}\medskip

In the bi-objective case, the classical dichotomic scheme can be initialized using dominated points minimizing each objective. Next, the lexicographically optimal points dominating the initial dominated points will be found. The same way, it is possible to initialize the set of points $S$ of Algorithm~\ref{RecDicho} using nondominated extreme points of the $p$ associated problems with $(p-1)$ objectives, that may be dominated for the initial $p$-objective problem. Using such a set $S_{\init}$ of initial points, we keep the property that the points $s$ of $S_{\init}$ are the only points such that $W^0_S(s)$ has a facet in $\fr(W^0)$. Algorithm~\ref{RecDicho} can therefore be applied and the nondominated extreme points dominating initial points will necessarily be found (as no dominated point can be identified as an extreme point of a nondominated facet of $\conv Y_{SN}$).

\subsection{Complexity of the algorithms}

Using the complexity results stated by \citet{boekler15}, we show that our algorithm runs also in incremental polynomial time under the same assumptions:
\begin{itemize}
\item The number of dimensions $p$ is fixed,
\item The lexicographic single-objective problem can be solved in polynomial time,
\item The convex hull is not computed with an incremental convex hull algorithm, but with a statical convex hull algorithm \citep[e.g.][]{Chazelle93}.
\end{itemize}
The same way as \citet{boekler15}, an implementation using an incremental convex hull algorithm cannot be guaranteed to run in incremental polynomial time (as it has been shown that incremental convex hull algorithms are not output sensitive \citep{Bremner99}). However, the use of an incremental convex hull algorithm remains an appropriate choice to obtain a practically efficient implementation.

\medskip

One iteration of the algorithm proposed by \citep{boekler15} is composed of the choice of an extreme point of the dual polyhedron $\cal D$ to obtain a (yet) unexplored weight $\lambda$, the solution of the obtained weighted sum problem, and the possible update of the dual polyhedron $\cal D$. Depending on the order of choice of the weights, the available weights for the next iterations may vary, as well as the number of single-objective problems to solve to complete the execution of the method. However, the complexity statement does not depend on the order of choice of the weights.

\medskip

The initial polyhedron $\cal D$ gives the possibility to choose among the extreme points of $\cl(W^0)$. Suppose that we start by choosing any possible weight such that $\lambda_p = 0$ (initially any weight except $(0,\ldots,0,1)$), and that we repeat this choice until there is no {choice left}. Then as the update of the dual polyhedron $\cal D$ describes a weight set decomposition with the $(p-1)$ first components of its extreme points, the nondominated extreme points of the $(p-1)$-objective problem defined by the objectives $z_1,\ldots,z_{p-1}$ are therefore computed. Next, we can in the same way choose the weights to determine the nondominated extreme points of the other $(p-1)$-objective problems. Hence, this ordering in the initialization phase of  the algorithm corresponds exactly to the initialization of Algorithm~\ref{RecDicho}. Besides this initialization, Algorithm~\ref{RecDicho} only explores weights defined using normals to nondominated facets, that correspond exactly to extreme points of $W^0_S(y)$ in $W^0$ for all $y \in S$, according to Proposition~\ref{faces}. In other words, these weights correspond exactly to extreme points of the dual polyhedron $\cal D$ that define yet unexplored weights for the algorithm by \citet{boekler15}, and both algorithms may continue with the solution of the same weighted sum problems. Consequently, Algorithm~\ref{RecDicho} and the algorithm proposed by \citet{boekler15} can be seen as dual to each other, and share the same theoretical complexity.

\medskip

We can obtain the same conclusion for Algorithm~\ref{DumDicho}. Indeed, Algorithm~\ref{DumDicho} can be seen as an application of Algorithm~ \ref{RecDicho} with an initialization based on the introduction of $p$ dummy points in the set of nondominated extreme points. As the number of dimensions $p$ is considered as fixed in the theoretical complexity, the same conclusion is valid for Algorithm~\ref{DumDicho}.

\section{Experimental Results}\label{sec:exp}

For our experiments, we have used instances of 3AP and 3KP from \citep{3SE}. The instances of 3AP have been extended up to a size of $100 \times 100$. We have also generated instances of 4AP and 4KP.

In the following, we use 10 series of 10 instances of 3AP with a size varying
from $10 \times 10$ to $100 \times 100$ with a step of 10, 8 series of 10 instances of 3KP with a size varying from 50 to 400 with a step of 50, 8 series of 10 instances of 4AP with a size varying from $5 \times 5$ to $40 \times 40$ with a step of 5, 4 series of 10 instances of 4KP with a size varying from $50 $ to $200$ items with a step of 50.
The objective function coefficients  of 3AP and 4AP instances are generated randomly in $[0,20]$, and the objective function and constraint coefficients of 3KP and 4KP instances are generated in $[1,100]$. The hungarian method and a dynamic programming method have been used to solve instances of respectively AP and KP \citep{PS82}. The computational geometry library CGAL \citep{cgal} has been used for the incremental computation of convex hull.

The computer used for the experiments is equipped with a Intel Core i7-4930MX Extreme 3 Ghz processor with 16 Gb of RAM, and runs under Ubuntu 14.04 LTS.
The proposed algorithms have been implemented in C++, and the binary have been obtained using the compiler g++ with the optimizer option -O2. 
We denote next the used implementations:
\begin{itemize}
\item the original implementation of the method by \citet{3SE} (3AP and 3KP instances only, denoted next by $PGE$),  
\item two implementations of Algorithm \ref{DumDicho} ($v1_{ex}$: convex hull computed using exact arithmetic, $v1_{fl}$: convex hull computed using floating numbers), 
\item two implementations of Algorithm \ref{RecDicho} ($v2_{ex}$: convex hull computed using exact arithmetic, $v2_{fl}$: convex hull computed using floating numbers),
\end{itemize}

In the case of the implementations of Algorithms \ref{DumDicho} and \ref{RecDicho} using exact arithmetic for the computation of the convex hull, the weight given by the normal to a facet is first divided by the greatest common divisor of its components. Next, the single-objective problem is solved using integer numbers if possible, otherwise using floating numbers (as very large numbers may be obtained). Here, the goal is not to obtain the fastest implementation, but the most reliable possible. If floating numbers are used for the computation of the convex hull, then all single-objective problems are necessarily solved using floating numbers. 

The initialization of Algorithm \ref{RecDicho} for 3AP and 3KP instances is realized by computing nondominated extreme points of the three bi-objective problems with objective functions $(z_1,z_2)$, $(z_2,z_3)$ and $(z_1,z_3)$ respectively, that are nondominated extreme points of the three-objective problem. For this, weighted-sum problems of the kind $\lambda_1z_1 + \lambda_2z_2$ are replaced by $M\lambda_1z_1 + M\lambda_2z_2 + z_3$ where an appropriate value of $M$ can be fixed using the fact that objective coefficients are integer and using the range of data (see, for example, \citep{Ozlen}). The same is done for the other bi-objective problems. Finally, the single-objective problems considered here are also solved using integer numbers if possible, otherwise using floating numbers. For 4AP and 4KP instances, nondominated extreme points of the four problems with objective functions $(z_1,z_2,z_3)$, $(z_1,z_2,z_4)$, $(z_1,z_3,z_4)$ and $(z_2,z_3,z_4)$ are in practice computed without guarantee to be nondominated points 
of the four-objective problem, as the scales of the weights applied to objective functions may vary considerably. 

Finally, the implementations of the method by \citet{3SE} use only floating numbers to define weights. 

\medskip

After the application of any of these implementations, the convex hull of the obtained points is reconstructed using the qhull library \citep{Barber:1996:QAC:235815.235821}, in order to keep only nondominated extreme points. Potential non-extremal supported points and nonsupported or dominated points (that may be obtained due to numerical imprecisions) are therefore filtered. Given the implementation choices, the most reliable implementation is clearly the one denoted $v2_{ex}$, as it uses integer numbers whenever possible and does not rely on dummy points. We will therefore consider the results of this implementation as a reference in the following. 

\medskip

All results are summarized in Tables \ref{tab3AP}-\ref{tab4KP}. In the columns showing the number of calls to a single-objective solver, corresponding to the columns $v1_{ex}$, $v2_{ex}$, $v2_{fl}$, the number between brackets gives the percentage of calls to a solver using floating numbers. This precision is useless for the columns $v1_{fl}$ and $PGE$ since it would always be 100\%. We can notice that the $v2_{ex}$ implementations do not require any call to a single-objective solver with floating numbers for any 3AP and 4AP instance. This is not the case for 3KP and 4KP instances, in particular because of a larger range of data. Therefore, we cannot claim that the $v2_{ex}$ implementation is perfectly reliable for these instances. However, the percentage of calls to a solver with floating numbers is not a perfect indicator of reliability. Indeed, a call to a solver using floating numbers indicates only that the components of the weight are too large for the solution to be completed only with integer 
numbers. If these components are just slightly too large, the solution will most likely give a correct result. If some of these components are in a larger scale of magnitude, numerical imprecisions will most likely occur. For example in the column $v1_{ex}$ of Table \ref{tab3AP}, the percentage of single-objective problems solved using floating numbers decreases gradually with the size. However, as all solutions are computed using integer numbers in the $v2_{ex}$ case, we can deduce that all solutions computed using floating numbers in the $v1_{ex}$ case concern facets, one extreme point of which is a dummy point. Finally, the used value of $M$ is larger and larger with the size of the problem. Therefore, the $v1_{ex}$ implementation should become less and less reliable with the size of the problem.

\medskip

Some information does not appear in Tables \ref{tab3AP}-\ref{tab4KP}. Indeed, the column $|Y_{SN_1}|$ is only provided by the implementation $v2_{ex}$ and this number may slightly differ for other implementations. For 3AP and 3KP instances, we always obtain the same set $Y_{SN_1}$. 
However, for 4AP instances, the $v1_{ex}$ and $v1_{fl}$ implementations sometimes miss (between 1 and 3) nondominated extreme points. It concerns one instance of size $15 \times 15$, one instance of size $20 \times 20$, two instances of size $25 \times 25$, one instance of size $30 \times 30$, seven instances of size $35 \times 35$, two instances of size $40 \times 40$. One nondominated extreme point is also missed by the implementation $v2_{fl}$ for one instance of size $40 \times 40$. For 4KP instances, all implementations always obtain the same set $Y_{SN_1}$ for all 
instances. However, the $v1_{fl}$ implementation is only able to solve 2 instances of size 150, and 2 instances of size 200. For other instances of the same size, no results are obtained due to numerical instabilities. 

\medskip


Finally, the proposed method is faster than $PGE$ whatever the used implementation and is able to solve instances with more than three objectives. The $v1$ and $v2$ implementations have similar computational times for 3AP and 3KP, and the floating versions are slightly faster for 3KP. For 4AP and 4KP instances, the computational times completely differ. The computation of the convex hull becomes a lot more expensive. Moreover, the $v_2$ versions are here slower than the $v_1$ versions, this is due to a (slightly) larger number of facets in the computed convex hulls. Finally, there is a large difference in computational time for the computation of convex hull using exact arithmetic and floating numbers. 

\begin{landscape}
\begin{table} 
\center  
\begin{tabular}{r || r | r r r r r | r r r r r} 
\hline
& &  \multicolumn{5}{c|}{Number of calls to a single-objective solver} &
  \multicolumn{5}{c}{CPU time (s)}\\ 
\hline 
size & $|Y_{SN_1}|$ & $PGE$ & $v1_{ex}$ & $v2_{ex}$ & $v1_{fl}$ & $v2_{fl}$ & $PGE$ & $v1_{ex}$ & $v2_{ex}$ & $v1_{fl}$ & $v2_{fl}$\\
\hline
$10 \times 10$ & 37.2 & 245.2 & 112.6 (0,6\%) & 111.1 (0\%) & 112.6 & 111.1 (62.4\%) & 0.007 & 0.004 & 0.001 & 0.000 & 0.000\\
$20 \times 20$ & 156.4 & 1079 & 470.2 (11.7\%) & 464.1 (0\%) & 470.2 & 464.1 (81.4\%) & 0.028 & 0.037 & 0.039 & 0.018 & 0.017\\
$30 \times 30$ & 351.9 & 2448.8 & 1057 (11.7\%) & 1044.6 (0\%) & 1057.3 & 1044.9 (87\%) & 0.157 & 0.137 & 0.148 & 0.094 & 0.092\\
$40 \times 40$ & 646.6 & 4571 & 1941.1 (9.5\%) & 1921.9 (0\%) & 1940.8 & 1921.9 (90.9\%) & 0.614 & 0.405 & 0.438 & 0.344 & 0.338\\
$50 \times 50$ & 1061.5 & 7500.5 & 3187.9 (7.8\%) & 3159.8 (0\%) & 3187.6 & 3159.6 (93\%) & 1.811 & 1.051 & 1.120 & 1.000 & 0.986\\
$60 \times 60$ & 1514.3 & $\times$ & 4544.5 (6.5\%) & 4507.9 (0\%) & 4546.3 & 4508.2 (94.5\%) & $\times$ & 2.187 & 2.291 & 2.222 & 2.201\\
$70 \times 70$ & 2023.3 & $\times$ & 6076 (6\%) & 6020.7 (0\%) & 6075.1 & 6021.9 (95.3\%) & $\times$ & 4.080 & 4.286 & 4.644 & 4.464\\
$80 \times 80$ & 2580.4 & $\times$ & 7752.7 (5.3\%) & 7678.4 (0\%) & 7750.6 & 7680.2 (96.2\%) & $\times$ & 7.083 & 7.280 & 8.144 & 8.031\\
$90 \times 90$ & 3260.8 & $\times$ & 9794.5 (4.7\%) & 9704.2 (0\%) & 9791.5 & 9703.6 (96.8\%) & $\times$ & 11.598 & 11.767 & 14.174 & 13.632\\
$100 \times 100$ & 3824.2 & $\times$ & 11487.4 (4.3\%) & 11384.8 (0\%) & 11485.9 & 11381.5 (97.2\%) & $\times$ & 17.158 & 17.275 & 21.528 & 20.822\\
\hline
\end{tabular}
\caption{Number of nondominated extreme points, number of calls to the single-objective solver and CPU time for 3AP instances. The numbers between brackets indicates the percentage of calls to a single-objective solver using floating numbers.} 
\label{tab3AP}
\end{table}
\end{landscape}

\begin{landscape}
\begin{table} 
\center  
\begin{tabular}{r || r | r r r r r | r r r r r} 
\hline
& &  \multicolumn{5}{c|}{Number of calls to a single-objective solver} &
  \multicolumn{5}{c}{CPU time (s)}\\ 
\hline 
size & $|Y_{SN_1}|$ & $PGE$ & $v1_{ex}$ & $v2_{ex}$ & $v1_{fl}$ & $v2_{fl}$ & $PGE$ & $v1_{ex}$ & $v2_{ex}$ & $v1_{fl}$ & $v2_{fl}$\\
\hline
50 & 58.1 & 387.9 & 175.3 (32.9\%) & 175.6 (35.2\%) & 175.3  & 175.6 & 0.009 & 0.014 & 0.012 & 0.004 & 0.004\\
100 & 173.4 & 1242.9 & 521.3 (20\%) & 520.7 (20.8\%) & 521.3 & 520.7 & 0.057 & 0.062 & 0.063 & 0.033 & 0.032\\
150 & 332 & 2448.9 & 997 (15.2\%) & 995.8 (15.7\%) & 997 & 995.8 & 0.186 & 0.159 & 0.164 & 0.101 & 0.102\\
200 & 629 & 4720 & 1888 (11.3\%) & 1885.6 (11.5\%) & 1888 & 1885.6 & 0.504 & 0.42 & 0.432 & 0.294 & 0.296\\
250 & 822.1 & 6195.2 & 2467.3 (10.6\%) & 2465.9 (11\%) & 2467.3 & 2465.9 & 0.779 & 0.634 & 0.669 & 0.466 & 0.468\\
300 & 1177.6 & 8969.9 & 3533.8 (8.9\%) & 3531.2 (9.1\%) & 3533.8 & 3531.2 & 1.464 & 1.156 & 1.230 & 0.896 & 0.912\\
350 & 1695.2 & 12988.5 & 5086.6 (7.8\%) & 5083.4 (7.8\%) & 5086.6 & 5083.4 & 2.574 & 2.104 & 2.234 & 1.673 & 1.704\\
400 & 2070.2 & 15897.3 & 6211.6 (7.7\%) & 6204.9 (7.7\%) & 6211.6 & 6204.9 & 3.506 & 2.956 & 3.082 & 2.385 & 2.412\\
\hline
\end{tabular}
\caption{Number of nondominated extreme points, number of calls to the single-objective solver and CPU time for 3KP instances.  The numbers between brackets indicates the percentage of calls to a single-objective solver using floating numbers.} 
\label{tab3KP}
\end{table}
\end{landscape}

\begin{landscape}
\begin{table} 
\center  
\small
\begin{tabular}{r || r | r r r r | r r r r} 
\hline
& &  \multicolumn{4}{c|}{Number of calls to a single-objective solver} &
  \multicolumn{4}{c}{CPU time (s)}\\ 
\hline 
size & $|Y_{SN_1}|$ & $v1_{ex}$ & $v2_{ex}$ & $v1_{fl}$ & $v2_{fl}$ & $v1_{ex}$ & $v2_{ex}$ & $v1_{fl}$ & $v2_{fl}$\\
\hline
$5 \times 5$ & 15.5 & 81.1 (5.9\%) & 85 (0\%) [65] & 81.1 & 85 (58.9\%) [65]  & 0.012 & 0.012 [0.002] & 0 & 0 [0]\\
$10 \times 10$ & 128.4 & 761.5 (29.1\%) & 766 (0\%) [352.6] & 761.5 & 766 (89.3\%) [352.6]  & 0.141 & 0.269 [0.018] & 0.042 & 0.055 [0.002]\\
$15 \times 15$ & 422 & 2622 (28.7\%) & 2621.5 (0\%) [893.3] & 2622 & 2621.3 (95.1\%) [893.3] & 0.864 & 2.186 [0.068] & 0.412 & 0.589 [0.018]\\
$20 \times 20$ & 970.8 & 6205.5 (23.8\%) & 6198.4 (0\%) [1602.8] & 6205.8 & 6198.6 (97.2\%) [1602.8] & 3.748 & 9.593 [0.146] & 2.296 & 3.140 [0.063]\\
$25 \times 25$ & 2034.7 & 13282 (19.2\%) & 13263 (0\%) [2674.2] & 13283.1 & 13264.1 (98.3\%) [2674.2] & 14.970 & 117.647 [0.596] & 10.735 & 14.374 [0.171]\\
$30 \times 30$ & 3393.4 & 22337.5 (17\%) & 22296 (0\%) [3953] & 22336 & 22295.3 (98.8\%) [3953] & 38.189 & 117.647 [0.596] & 29.177 & 42.312 [0.364]\\
$35 \times 35$ & 5962.5 & 39693.7 (14.1\%) & 39611.4 (0\%) [5717.5] & 39692 & 39610.6 (99.2\%) [5717.8] & 119.790 & 356.330 [1.120] & 97.410 & 146.630 [0.762]\\
$40 \times 40$ & 8612.8 & 57610.0 (12.7\%) & 57482.5 (0\%) [7444.7] & 57611.4 & 57479.8 (99.4\%) [7445.1] & 280.009 & 770.816 [1.796] & 235.548 & 357.133 [1.374]\\
\hline
\end{tabular}
\caption{Number of nondominated extreme points, number of calls to the single-objective solver and CPU time for 4AP instances.  The numbers between brackets indicates the percentage of calls to a single-objective solver using floating numbers. The numbers between hooks indicates respectively the number of calls to a single-objective solver and the CPU time needed for the initialization of the methods $v2_{ex}$ and $v2_{fl}$.} 
\label{tab4AP}
\end{table}
\end{landscape}

\begin{landscape}
\begin{table} 
\center  
\small
\begin{tabular}{r || r | r r r r | r r r r} 
\hline
& &  \multicolumn{4}{c|}{Number of calls to a single-objective solver} &
  \multicolumn{4}{c}{CPU time (s)}\\ 
\hline 
size & $|Y_{SN_1}|$ & $v1_{ex}$ & $v2_{ex}$ & $v1_{fl}$ & $v2_{fl}$ & $v1_{ex}$ & $v2_{ex}$ & $v1_{fl}$ & $v2_{fl}$\\
\hline
50 & 160.7 & 927.4 (69.3\%) & 943.9 (30.9\%) [487.4] & 927.4  & 943.9 [487.4]  & 0.232 & 0.436 [0.036] & 0.076 & 0.098 [0.012]\\
100 & 932.4 & 5778.6 (76.3\%) & 5802.5 (49.4\%) [1806.5] & 5778.6 & 5802.5 [1806.5] & 4,008 & 9.764 [0.203] & 2.237 & 2.968 [0.113]\\
150 & 2818 & 17919.8 (83.5\%) & 17956.5 (63.4\%) [3965.2] & $\times$ & 17965.5 [3965.2] & 29.262 & 83.907 [0.657] & $\times$ & 27.4 [0.405]\\
200 & 5991 & 38728.3 (86.9\%) & 38772.7 (70.8\%) [6704] & $\times$ & 38772.7 [6704] & 135.613 &  373.004 [1.496] & $\times$ & 144.565 [1.004]\\
\hline
\end{tabular}
\caption{Number of nondominated extreme points, number of calls to the single-objective solver and CPU time for 4KP instances.  The numbers between brackets indicates the percentage of calls to a single-objective solver using floating numbers. The numbers between hooks indicates respectively the number of calls to a single-objective solver and the CPU time needed for the initialization of the methods $v2_{ex}$ and $v2_{fl}$.} 
\label{tab4KP}
\end{table}
\end{landscape}

\section{Conclusion}

In this paper,  we have presented a new method to compute all the nondominated extreme points of a multi-objective (mixed)-integer linear programme. Our method generalizes the classical dichotomic scheme of \citep{aneja79,cohon78} to the multi-objective case in the most natural way. This has been realized by a combination of ideas that were rather complex to apply individually and that have been simplified, resulting in a method far less technical than the preceding propositions. The practical efficiency of the method is supported by experimental results. On a theoretical point of view, an implementation in incremental polynomial time with respect to the number of nondominated extreme points is possible, if the lexicographic problem can be solved in polynomial time.

\section*{Acknowledgments}
This work was partially supported by the bilateral cooperation project MOCO-BBC funded by the German Academic Exchange Service (DAAD, Project-ID 57212018) and by Campus France, PHC PROCOPE 2016 (Project No.~35476QD). 

This work was partially supported by the project ANR/DFG-14-CE35-0034-01 ``Exact Efficient Solution of Mixed Integer Programming Problems with Multiple Objective Functions (vOpt)''.

\bibliographystyle{plainnat}
\bibliography{biblioSE}

\end{document}